\numberwithin{equation}{section}  
\newtheorem{teor}{Theorem}[section]
\newtheorem{prop}[teor]{Proposition} 
\newtheorem{lem}[teor]{Lemma}
\newcommand{\ugdef}{\overset{\text{\text{def}}}{=}}
\newcommand{\xj}{X^{J}_{\alpha_{\tiny{J}},i}}
\newcommand{\gt}{G^T}
\newcommand{\pgt}{\left(G^T\right)}
\newcommand{\alj}{\alpha_{\tiny{J}}}
\newcommand{\Bk}{ T_k}
\newcommand{\Bj}{T_j}
\newcommand{\Bkj}{ {T}_{k_J}}
\newcommand{\chset}{\mathcal{C}_n}
\newcommand{\relen}[2]{ H(#1\,|\,#2) }
\DeclareMathOperator{\att}{\mathbb{E}}
\DeclareMathOperator{\prob}{\mathbb{P}}
\DeclareMathOperator{\var}{\mathbb{V}\text{ar}}
\newcommand{\prosp}{ \mathcal{M}^+_1(S)}
\newcommand{\totprosp}{\mathcal{M}^+_1(S^{2^n-1})}
\newcommand{\fprosp}[1]{\mathcal{M}^+_1 \left(S^{#1}\right) }
\newcommand{\Pset}[1]{\mathcal{P}_{#1}}
\newcommand{\vx}[1]{\boldsymbol{x}^{(#1)}}
\newcommand{\beq}{\begin{equation}}
\newcommand{\eeq}{\end{equation}}
\newcommand{\bea}{\begin{aligned}}
\newcommand{\eea}{\end{aligned}}
\newcommand{\R}{\mathbb R}
\begin{document}

\title[On a nonhierarchical Perceptron GREM]{On a nonhierarchical generalization \\ of the Perceptron GREM}

\author[N. Kistler]{Nicola Kistler}
\address{Nicola Kistler \\ J.W. Goethe-Universit\"at Frankfurt, Germany.}
\email{kistler@math.uni-frankfurt.de}

\author[G. Sebastiani]{Giulia Sebastiani}
\address{Giulia Sebastiani \\ J.W. Goethe-Universit\"at Frankfurt, Germany.}
\email{sebastia@math.uni-frankfurt.de}

\thanks{This work has been supported by a DFG research grant, contract number 2337/1-1.}

\subjclass[2000]{60J80, 60G70, 82B44} \keywords{Mean Field Spin Glasses, Large Deviations, Gibbs-Boltzmann and Parisi Variational Principles}

\date{\today}

\begin{abstract}
We introduce a nonlinear, nonhierarchical generalization of Derrida's GREM and establish through a Sanov-type large deviation analysis both a Boltzmann-Gibbs principle as well as a Parisi formula for the limiting free energy. In line with the predictions of the Parisi theory, the free energy is given by the minimal value over all Parisi functionals/hierarchical structures in which the original model can be coarse-grained. 
\end{abstract}

\maketitle

\section{Introduction} 

The study of mean field spin glasses, started by physicists in the 1970's and culminated in the Parisi theory \cite{mpv}, has over time revealed a rich and challenging mathematical structure. Despite the enormous progress over the last decades, the whole Parisi picture and its presumed universality for mean field models still remains not fully understood. Indeed, for the prototypical models of the theory such as the  Sherrington-Kirkpatrick [SK] model \cite{sk}, a rigorous treatment via large deviations techniques within the classical Gibbs-Boltzmann formalism is still lacking, and one has to resort to sophisticated machinery, in particular Guerra's interpolations \cite{g, t} and the Ghirlanda-Guerra identities \cite{gg}. These tools are remarkably efficient but cannot provide, by their own nature, insights for given realization of the disorder. We refer the reader to  \cite{panchenko} and references therein for a comprehensive account of the state of the art for the SK-model\footnote{A treatment of spin glasses which is somewhat complementary to the Parisi theory goes through the TAP-Plefka framework \cite{tap, plefka}. The rigorous studies which bypass the interpolations/GG-identities employ the Kac-Rice formula and some ensuing large deviations estimates for the complexity/number of critical points of Gaussian random fields. This type of analysis is to date however limited to spherical models with REM-like ultrametric structure, see \cite{subag_one, subag_two} and references.}. 

One of the most puzzling features of the Parisi theory is the {\it ultrametricity}, namely the emergence in the infinite volume limit of hierarchical structures closely related to the generalized random energy models, the GREM, introduced by Derrida in the 1980's \cite{d_rem, d}. For a large class of SK-type Hamiltonians, the ultrametricity has meanwhile been put on rigorous ground by Panchenko \cite{p_ultra}. The analysis  relies crucially on the Ghirlanda-Guerra identities. 

In order to shed some light on the {\it origin} of ultrametricity, simplified models for spin glasses which are amenable to a rigorous solution via large deviation techniques have been introduced by Bolthausen and the first author in \cite{bk, bk1, bk2}: the first paper deals with a nonhierarchical yet linear superposition of Derrida's GREMs, whereas the second paper analyses a non-linear version of the GREM which is closely related to the perceptron model from neural networks \cite{mpv} and which we refer to henceforth as {\it Perceptron GREM}.  

In the present paper we introduce a random Hamiltonian which is neither hierarchical from the start nor given by linear superpositions of GREM-like models. For such nonhierarchical, nonlinear generalization of the random energy models we establish a quenched, Sanov-type large deviation principle for the empirical distribution of the random energies which in turn allows to derive a Boltzmann-Gibbs principle, as well as a Parisi formula for the free energy in the thermodynamical limit. The limiting free energy turns out to be given by the minimal value over {\it multiple}  nonlinear Parisi functionals / hierarchical structures.

What is perhaps more, our analysis suggests that the onset of ultrametricity is intimately related to certain monotonicity properties of relative entropies/Kullback-Leibler functionals, and shows how these monotonicities lead to the optimal balance in the energy-entropy competition. 

We emphasize that the spin glass we investigate here is {\it nonlinear} insofar the sub-energies interact via a perceptron-like function, see \eqref{original_Ham} below for the definition of the model, but the Hamiltonian still is a linear functional of the empirical measures of the  random energies. The case of nonlinear functionals, which give rise to models which better approximate realistic spin glasses such as the SK-model, will be addressed in a forthcoming work.

\section{Definition of the model and main results}

We start with some notation: for $n\in\mathbb{N}$, we set $I\ugdef \{1,2,...,n\}$ for the set of {\it species}. 
For $J\subseteq I$ we denote by  $$\Pset{J}\ugdef\{A\subseteq J: A\neq\emptyset\}$$
the power set of $J$ (without empty set).  For $N\in\mathbb{N}$, which will play the role of the volume, and $j \in I$,  we denote by $$\alpha_j\in\left\{1,\cdots,2^{\frac{N}{n}}\right\}$$ the {\it spins} associated to the $j^{th}$-species. For $J\in\Pset{I}$ we identify the $|J|$-tuple $(\alpha_j)_{j\in J}$ with 
$$\alj\ugdef(\alpha_j)_{j\in J},\qquad\alj\in\left\{1,\cdots,2^{\frac{|J|N}{n}}\right\}.$$

By a slight abuse of notation, if $J= I$ we shorten $\Pset{n}$ for $\Pset{I}$, and write 
$\alpha$ for a {\it configuration}, i.e. the whole vector of spins $(\alpha_1,\cdots,\alpha_n)$. 

Remark that we have altogether $2^N$ configurations, and since our configuration space is in one-to-one correspondence with the set $\{1, \dots, 2^N\}$, to lighten notations, we will refer to the latter in the counting procedures which we implement below.  

Consider independent random variables $$\left\{\xj \right\}_{J\in\Pset{n}, i \in \mathbb N}$$ with values in a given Polish space $S$ equipped with the Borel $\sigma$-field $\mathcal{S}$, and defined on a probability space $(\Omega,\mathcal{A},\mathbb{P})$. We assume that for $J\in\Pset{n}$ each variable $\xj$ is distributed according to the same probability distribution $\mu_J$. In this case, the product measure $\mu \ugdef \bigotimes\limits_{J\in\Pset{n}}\mu_J$ gives the joint distribution of the random vector 
\beq \label{original_vec}
X_{\alpha,i}\ugdef\left( \xj \right)_{ J\in\Pset{n} }.
\eeq

Since $|\Pset{n}|=2^n-1$, the joint law $\mu$ is an element of  $\mathcal{M}^+_1(S^{2^n-1})$, the space of all probability distributions on the product space $S^{2^n-1}$ which, endowed with the topology of weak convergence, is Polish itself. 

For $d\in\mathbb N$, $\nu\in \mathcal{M}^+_1(S^{d})$, we denote by $B_r(\nu)$ the open ball centered in $\nu$ with radius $r>0$ in one of the standard metrics (e.g. Prokhorov's metric); a {\it{neighborhood}} or an {\it{open subset}} are conceived in the corresponding topology. This turns $\mathcal{M}^+_1(S^{d})$ into a complete, separable metric space. 

For $d\in\mathbb N$, $\nu\in\mathcal M_1^+(S^d)$ and $B$ a subset of indeces, $B\subseteq\{1,\cdots,d\}$, we write $\nu^{(B)}\in\fprosp{|B|}$ for the marginal of $\nu$ restricted to the coordinates corresponding to indices in $B$;  with $\pi_{B}: S^{d} \to S^{|B|}$ 
the associated natural projection, it thus holds that 
$$\nu^{(B)}\ugdef \left( \pi_{B} \right)_*(\nu)=\nu \circ \pi_{B}^{-1}.$$
Given a function $\phi: S^{2^n-1}\to\mathbb{R}$, our spin glass Hamiltonian is then 
\begin{equation}\label{original_Ham}
H_N(\alpha) = \sum\limits_{i\leq N} \phi\left(X_{\alpha,i} \right) = N \int \phi(x) L_{N, \alpha}(dx)\,,
\end{equation}
where 
\begin{equation}\label{empirical_distr} 
L_{N,\alpha} = \frac{1}{N}\sum\limits_{i=1}^N \delta_{X_{\alpha,i}}\,,
\end{equation}
is the empirical measure, a random element in $\mathcal M_1^{+}\left(S^{2^n-1}\right)$. The partition function and free energy associated  to \eqref{original_Ham} are defined as
\begin{equation} \label{partition_function}
Z_{N}(\phi) = 2^{-N}\sum\limits_{\alpha=1}^{2^N} e^{H_N(\alpha)}, \qquad \text{resp.} \quad F_{N}(\phi)=\frac{1}{N}\log Z_N(\phi). 
\end{equation}
This spin glass model is thus a generalization of the Perceptron GREM discussed in \cite{bk2} but which, unlike the latter, has no in-built ultrametric structure.  

For $d\in\mathbb N$, $\nu, \nu' \in \mathcal M_1^+(S^d)$, we denote by 
$$H(\,\nu\,|\,\nu'\,) \ugdef \begin{cases}
\int \log \frac{d \nu}{d \nu'} d\nu & \text{if}\quad \nu \ll \nu' \\
\infty & \text{otherwise.} 
\end{cases}$$
the usual relative entropy of $\nu$ with respect to $\nu'$. Finally, for $J\in \Pset{n}$, we set
\begin{equation} \label{def_cond}
C_J \ugdef \left\{ \nu \in \totprosp : \relen{ \nu^{(\Pset{J})} }{ \mu^{(\Pset{J})} } \leq \frac{|J|}{n} \log2 \right\}.
\end{equation}

\noindent Our first result states that the quenched infinite volume free energy exists, and is given by the solution of a Boltzmann-Gibbs variational principle. The upshot is analogous to the control of the thermodynamical limit  established in \cite{bk} through second moment estimates on the energy levels for the case of linear $\phi$, and with disorder given by Gaussian random variables.

\begin{teor} \label{teor_Gibbs}
Let $\phi: S^{2^n-1}\to\mathbb{R}$ be continuous and bounded. 
Then $f(\phi)\ugdef\lim\limits_{N\to\infty} F_N(\phi)$ exists almost surely, and is given by
\begin{equation} \label{quenched_limit}
f(\phi)= \sup_{\nu\in\totprosp} \left\{ \int \phi d\nu - \mathfrak J(\nu) \right\} 
\end{equation}
where the \emph{quenched rate function} $\mathfrak J:\totprosp\to \overline{\mathbb{R}}$  is given by
\begin{equation} \label{def_J}
\mathfrak J(\nu)\ugdef\begin{cases}
H(\nu|\mu) & \mbox{if} \quad \nu \in \bigcap\limits_{J\in \Pset{n}} C_J \\
\infty & \mbox{otherwise}. 
\end{cases}
\end{equation}

\end{teor}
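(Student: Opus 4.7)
The plan is a quenched Sanov-type analysis. Write
$$Z_N(\phi) \;=\; 2^{-N}\sum_\alpha e^{N\int\phi\,dL_{N,\alpha}}$$
and, using continuity and boundedness of $\phi$, reduce the free energy to
$$F_N(\phi) \;\approx\; -\log 2 \,+\, \tfrac{1}{N}\log \sum_k \mathcal{N}_N(\nu_k, r)\, e^{N\int\phi\,d\nu_k},$$
where $\{B_r(\nu_k)\}_k$ is a finite cover of the relevant region of $\totprosp$ and
$$\mathcal{N}_N(\nu, r) \ugdef \bigl|\{\alpha : L_{N,\alpha} \in B_r(\nu)\}\bigr|\,.$$
The theorem follows by Laplace's method once one establishes $\tfrac{1}{N}\log \mathcal{N}_N(\nu, r) \to \log 2 - \mathfrak J(\nu)$ almost surely (in the iterated limit $N \to \infty$, $r \downarrow 0$), with the convention $\mathcal{N}_N = 0$ eventually when $\mathfrak J(\nu) = \infty$.

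\textbf{Upper bound on $\mathcal{N}_N$.} For fixed $\alpha$ the vectors $(X_{\alpha,i})_{i \leq N}$ are i.i.d.\ with law $\mu$, so Sanov's theorem gives $\prob(L_{N,\alpha}\in B_r(\nu)) \le e^{-N(\relen{\nu}{\mu}-\varepsilon)}$; a first-moment estimate combined with Markov and Borel--Cantelli yields $\limsup_N \tfrac{1}{N}\log \mathcal{N}_N(\nu,r) \le \log 2 - \relen{\nu}{\mu}$ a.s. When some constraint $C_J$ is \emph{strictly} violated, note that $L_{N,\alpha}\in B_r(\nu)$ forces the $\Pset{J}$-marginal empirical measure — which depends only on the $2^{|J|N/n}$ sub-configurations $\alpha_J$ — into a small ball around $\nu^{(\Pset{J})}$; applying Sanov and Borel--Cantelli at the reduced scale $|J|N/n$ yields $\mathcal{N}_N(\nu,r) = 0$ eventually, almost surely.

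\textbf{Lower bound via second moment.} Fix $\nu$ strictly inside $\bigcap_J C_J$. Partition pairs $(\alpha,\alpha')$ by their overlap set $J(\alpha,\alpha') \ugdef \{j \in I : \alpha_j = \alpha'_j\}$; pairs with overlap exactly $J$ number $\approx 2^{(2n-|J|)N/n}$. Conditionally on the shared $\Pset{J}$-components of $X_{\alpha,i}$ and $X_{\alpha',i}$, the remaining coordinates are independent, and since $\mu = \bigotimes_K \mu_K$ the relative entropy splits by the chain rule, so a conditional Sanov estimate gives
$$\prob\bigl(L_{N,\alpha}\in B_r(\nu),\,L_{N,\alpha'}\in B_r(\nu)\bigr) \;\le\; \exp\bigl\{-N\bigl(2\relen{\nu}{\mu} - \relen{\nu^{(\Pset{J})}}{\mu^{(\Pset{J})}} - O(\varepsilon)\bigr)\bigr\}.$$
The $J$-contribution to $\att \mathcal{N}_N^2 / (\att \mathcal{N}_N)^2$ is therefore, up to subexponential factors,
$\exp\bigl(N[\relen{\nu^{(\Pset{J})}}{\mu^{(\Pset{J})}} - \tfrac{|J|}{n}\log 2]\bigr)$, bounded \emph{precisely} under $C_J$. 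Summing over the $2^n$ overlap sets gives $\att \mathcal{N}_N^2 \le C(\att \mathcal{N}_N)^2$, and Paley--Zygmund produces $\mathcal{N}_N(\nu,r) \gtrsim 2^N e^{-N\relen{\nu}{\mu}}$ with probability bounded away from $0$.

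\textbf{Main obstacle.} The delicate point is promoting this positive-probability lower bound to an almost-sure one: a crude bounded-differences estimate for $F_N$ is too weak, because the underlying disorder consists of exponentially many variables. The way out is to observe that resampling one variable $X^J_{\beta, i}$ alters $\log Z_N$ by at most a constant times the Gibbs weight $\mathcal{G}_N(\alpha_J = \beta)$ of the affected sub-configuration, which is typically exponentially small; a Gibbs-weighted Efron--Stein / bounded-differences argument — in the spirit of the concentration estimates used for the linear GREM in \cite{bk} — then gives exponential concentration of $F_N$ around $\att F_N$, and Borel--Cantelli closes the loop. A standard Laplace asymptotic along the countable cover finally delivers \eqref{quenched_limit}, with the supremum restricted to $\bigcap_J C_J$ since $\mathfrak J$ is infinite elsewhere.
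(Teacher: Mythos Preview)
Your strategy matches the paper's exactly up to and including the second-moment computation: Sanov upper bound, marginal Sanov for violated constraints, and a pair-decomposition by overlap set $J(\alpha,\alpha')$ for the lower bound. The divergence comes in how you extract the almost-sure lower bound from the second moment, and here you have created an obstacle that is not actually there.

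You explicitly fix $\nu$ \emph{strictly} inside $\bigcap_J C_J$, so that $\relen{\nu^{(\Pset{J})}}{\mu^{(\Pset{J})}} < \tfrac{|J|}{n}\log 2 - \eta$ for some $\eta>0$ and every nonempty $J$. But then your own formula for the $J$-contribution to $\att\mathcal{N}_N^2/(\att\mathcal{N}_N)^2$ is not merely bounded: it is at most $e^{-\eta N}$. The only term that is not exponentially small is $J=\emptyset$, which contributes exactly $(\att\mathcal{N}_N)^2$ (independent pairs). Hence you actually have
\[
\var\,\mathcal{N}_N(\nu,r)\;\le\; e^{-\delta N}\bigl(\att\,\mathcal{N}_N(\nu,r)\bigr)^2
\]
for small enough $r$, and Chebyshev (not Paley--Zygmund) gives
\[
\prob\Bigl(\mathcal{N}_N(\nu,r)\le \tfrac12\,\att\,\mathcal{N}_N(\nu,r)\Bigr)\;\le\; 4e^{-\delta N},
\]
which is summable. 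Borel--Cantelli then yields the almost-sure lower bound on $\mathcal{N}_N$ directly, with no concentration of $F_N$ needed. This is precisely the route the paper takes (their Proposition~\ref{relevant_prop}); the passage from strict interior back to the full constraint set is handled by a convexity argument, approximating the maximizer $\nu'$ by $\nu_\lambda=\lambda\mu+(1-\lambda)\nu'$.

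Your proposed ``Gibbs-weighted Efron--Stein'' fix is therefore unnecessary, and as written it is also vague: the claim that resampling one $X^J_{\beta,i}$ perturbs $\log Z_N$ by $O(\mathcal{G}_N(\alpha_J=\beta))$ is correct, but ``typically exponentially small'' is not a concentration input. If you did want concentration of $F_N$, there is a much simpler route you overlook: group the disorder by the replica index $i\in\{1,\dots,N\}$. These $N$ blocks are independent, and since $\phi$ is bounded, resampling one block changes every $H_N(\alpha)$ by at most $2\|\phi\|_\infty$, hence $|F_N'-F_N|\le 2\|\phi\|_\infty/N$. McDiarmid over these $N$ coordinates then gives $\prob(|F_N-\att F_N|>t)\le 2e^{-cNt^2}$, which would close your argument. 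But again, the paper shows this detour is avoidable.
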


Since the set $\Pset{n}$ is partially ordered through the inclusion relation, we can consider its totally ordered subsets, which are referred to, with the terminology from \cite{bk}, as  {\it chains}. We set

\begin{equation} \label{def_chains}
\chset \ugdef \left\{ T = \left\{ A^T_k \right\}_{k=0}^n : A^T_0=\emptyset, \, \forall k \in I \setminus \{n\}:\; A^T_k\subset I, \, A^T_k \subset A^T_{k+1}, \, |A^T_k|=k, \, A^T_n = I \right\}.
\end{equation}

Each element in $\chset$ corresponds to a specific hierarchical structure among all those in which the original model can be {\it coarse-grained}, i.e. to each chain $T\in\chset$ we can associate a Hamiltonian $H^T_N$ which gives rise to a Perceptron GREM.  To see how this goes, for $T\in\chset$, we define 
\begin{equation} \label{B_k}
\Bk\ugdef \{ J\subset I: J\subseteq A^T_k, J\not\subseteq A^T_{k-1}\} \subset \Pset{n}\,,\quad k\in I.
\end{equation}
The sets ${T}_1,\cdots, {T}_n$ describe explicitely how a specific chain $T$ assembles the $n$-levels into the corresponding hierarchical structure: if
\beq \label{chain_explicit}
T = \left\{ \,\emptyset \subset \{a_1\} \subset  \{a_1, a_2\} \subset \cdots \subset \{a_1, \cdots, a_{n-1}\} \subset \{1,\cdots,n\} \, \right\}
\eeq
then the $\alpha_{a_1}$'s play the role of indeces for the first level, $(\alpha_{a_1},\alpha_{a_2})$'s for the second etc..\\
Since $\Pset{n}$ is the disjoint union of the $\{\Bk\}_{k\in I}$ for any $J\in\Pset{n}$ there is a unique
\beq \label{index_k}
k_{J} = k_J(T) \in I \; \text{for which} \;  J \in\Bkj \,.
\eeq
Given these indeces we shorten\footnote{We stress that the order in which the random sub-energies appear in the vectors \eqref{vec_T} is the same as the one of the original vector \eqref{original_vec}: if $\Pset{n}=\{J_1, \cdots, J_{2^n-1}\}$, then the $c$-th variable is distributed according to $\mu_{J_c}$, $\forall \,\, c\in\{1,\cdots,2^{n}-1\}$.}
\beq \label{vec_T}
Y^T_{\alpha,i}=\left(Y^J_{\alpha_{a_1},\cdots\alpha_{a_{k_J}},i}\right)_{J\in\Pset{n}},
\eeq
where the $Y^J_{\cdot,\cdot}$-s are $\mu_J$-distributed and independent both over the $\alpha$'s as well as over the $i$'s. Hence, the coarse grained Hamiltonian corresponding to the chain $T\in\chset$ is defined as
\begin{equation}\label{def_Ham_T}
H^T_N(\alpha)\ugdef \sum\limits_{i=1}^N \phi(Y^T_{\alpha,i})\,,
\end{equation}
with associated partition function and free energy in finite volume denoted by
\begin{equation}\label{def_Z_F_T}
Z^T_N(\phi) =2^{-N} \sum\limits_{\alpha=1}^{2^N}e^{H^T_N(\alpha)},\quad F^T_N(\phi) = \frac{1}{N}\log Z^T_N(\phi).
\end{equation}
The Hamiltonian \eqref{def_Ham_T} is in the form of a Perceptron GREM\footnote{   
The coarse-graining requires some burdening notation, but the procedure is quickly understood through a concrete example which the reader may want to keep in mind throughout: let $n=2$, and consider $X_{\alpha,i}=( X^{\{1\}}_{\alpha_1,i},\,\,\,X^{\{2\}}_{\alpha_2,i},\,\,\,X^{\{1,2\}}_{(\alpha_1,\alpha_2),i})\sim \mu_{\{1\}}\otimes\mu_{\{2\}}\otimes\mu_{\{1,2\}}$, in which case 
$ \phi: S^3\to \R$ and $H_N(\alpha)=\sum_i \phi(X^{\{1\}}_{\alpha_1,i},\,\,\,X^{\{2\}}_{\alpha_2,i},\,\,\,X^{\{1,2\}}_{\alpha_1,\alpha_2,i})$.  There are two possible chains: $R\ugdef \left\{ \{1\},\,\{1,2\} \right\}$ or $T\ugdef \left\{ \{2\},\,\{1,2\} \right\}$. By way of example we focus on the latter, in which case the sets \eqref{B_k} are given by ${T}_1= \left\{ \{2\} \right\} , {T}_2= \left\{ \{1\}, \{1,2\} \right\}$, and the new sub-energies are given by
$Y^{\{2\}}_{\alpha_2,i}\sim \mu_{\{2\}}$ for the "first level", and $(Y^{\{1\}}_{(\alpha_1, \alpha_2),i}, Y^{\{1,2\}}_{(\alpha_1,\alpha_2),i})\sim \mu_{\{1\}}\otimes \mu_{\{1,2\}}$ for the second. The Hamiltonian of the coarse-grained model is then $H_N^T(\alpha) = \sum_i \phi( Y_{\alpha_1, \alpha_2, i}^{\left\{1\right\}}, Y_{\alpha_2, i}^{\left\{2\right\}},  Y_{\alpha_1, \alpha_2, i}^{\left\{1, 2\right\}} )$. The key aspect of the procedure is thus to replace the $X^{\{1\}}_{\alpha_1,i}$ with $Y^{\{1\}}_{\alpha_1, \alpha_2,i}$ the latter being independent over the $\alpha_2's$: this eventually induces a hierarchical correlation structure as in the Perceptron GREM.}
introduced and solved in \cite{bk2}:  both a  Gibbs variational principle, as well as a Parisi-like formula, for the limiting free energy are available.

For later purposes, we need to recall how the Parisi variational principle for the Perceptron GREM is constructed \cite{bk2}: consider the standard simplex 
\begin{equation}
\Delta \ugdef \left\{ \boldsymbol m = (m_1, \dots, m_n): \; 0 < m_1 \leq m_2 \leq \dots \leq m_n \leq 1 \right\}\,, 
\end{equation}
and recursively construct functions 
\begin{equation} \begin{aligned} \label{def_phik}
\phi^T_n &= \phi, \\
\phi^T_{k-1}\left(\boldsymbol{x}^{(1)},\cdots,\boldsymbol{x}^{(k-1)}\right) & = \frac{1}{m_k} \log \int \,\, \exp \left[ m_k\phi^T_k\left(\boldsymbol{x}^{(1)},\cdots, \boldsymbol{x}^{(k-1)}, \boldsymbol{y} \right) \right] \mu^{(\Bk)}(d\boldsymbol{y})
\,,
\end{aligned} \end{equation}
for $k\in I$, shortening also $\boldsymbol{x}^{(j)}=(x_J)_{J\in T_j}$ and $ \mu^{(\Bk)}(d\boldsymbol{y})=\prod_{J \in \mathcal P_k^T \setminus \mathcal P_{k-1}^T}\mu_J(d y_J)$, and omitting the dependence of the $\phi$-functions on the parameter $\boldsymbol m$ to lighten notations.

We then set
\begin{equation}\label{func_Par}
P^T(\boldsymbol m)  \ugdef \frac{\log2}{n}\sum\limits_{k=1}^n \frac{1}{m_k}+\phi_0^T(\boldsymbol m)\,.
\end{equation}

It is proved in \cite[Theorem 2.3-2.5]{bk2} that for continuous and bounded $\phi$, the limit $f^T(\phi)\ugdef\lim\limits_{N\to\infty}F^T_N$ exists almost surely and is given by the solution of the Boltzmann-Gibbs variational principle
\begin{equation}\label{free_en_T}
f^T(\phi) = \sup_{\nu\in\totprosp} \left\{ \int \phi(x) d\nu - \mathfrak J^T(\nu) \right\}\,,
\end{equation}
with quenched rate function $\mathfrak J^T:\totprosp\to \overline{\mathbb{R}}$ given by
\begin{equation}\label{def_J_T}
\mathfrak J^T(\nu)\ugdef\begin{cases}
H(\nu|\mu) & \mbox{if} \quad \nu \in \bigcap\limits_{k=1}^n C_{A^T_k} \\
\infty & \mbox{otherwise}, \end{cases}
\end{equation}
and the sets $C_{A^T_k}$ as in \eqref{def_cond}. Moreover, the solution can be given in terms of the following Parisi variational principle
\begin{equation}\label{Parisi_T}
f^T(\phi) = \min_{\boldsymbol{m}\in\Delta} P^T(\boldsymbol{m})\,.
\end{equation}

\noindent Our contribution here is to extend the above to the {\it nonhierarchical} Hamiltonian \eqref{original_Ham}. 

\begin{teor}\label{solution_T_Par} Assume $\phi: S^{2^n-1} \to \mathbb{R}$ is continuous and bounded. Then
\begin{equation}\label{free_en_Par}
f(\phi) = \min\limits_{T\in\chset} f^T(\phi) =  \min\limits_{T\in\chset} \left\{ \min_{\boldsymbol{m}\in\Delta} P^T(\boldsymbol{m}) \right\}
\end{equation}
where $f(\phi)$ is the limiting free energy  \eqref{quenched_limit}, and $f^T(\phi)$ the limiting free energy \eqref{free_en_T} associated to the chain $T\in\chset$.
\end{teor}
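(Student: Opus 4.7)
The theorem asserts a matching pair of inequalities. The first, $f(\phi)\le\min_T f^T(\phi)$, is immediate: for every chain $T\in\chset$ the indexing sets $\{A^T_k\}_{k=1}^n$ lie in $\Pset{n}$, so $\bigcap_{J\in\Pset{n}}C_J\subseteq\bigcap_{k=1}^n C_{A^T_k}$ and therefore $\mathfrak J^T(\nu)\le\mathfrak J(\nu)$ pointwise. Comparing the Boltzmann--Gibbs variational principles \eqref{quenched_limit} and \eqref{free_en_T}, which share the same objective on the relevant sets, immediately gives $f(\phi)\le f^T(\phi)$ for every $T$.

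The substantive part is the reverse inequality. My plan is, after fixing a maximizer $\nu^\star$ of \eqref{quenched_limit}, to construct a chain $T^\star\in\chset$ and parameters $\boldsymbol{m}^\star\in\Delta$ for which $P^{T^\star}(\boldsymbol{m}^\star)=\int\phi\,d\nu^\star-H(\nu^\star|\mu)=f(\phi)$; by \eqref{Parisi_T} this forces $\min_T f^T(\phi)\le f(\phi)$. Existence of $\nu^\star$ is standard: the feasible set $\bigcap_{J}C_J$ is convex and weakly closed, the objective $\nu\mapsto\int\phi\,d\nu-H(\nu|\mu)$ is upper semicontinuous (for $\phi$ bounded continuous) and concave, and the sublevel sets $\{H(\cdot|\mu)\le C\}$ are weakly compact. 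A KKT analysis of this convex program produces nonnegative multipliers $\{\lambda^\star_J\}_{J\in\Pset{n}}$ with complementary slackness, so $\lambda^\star_J>0$ only on the active set $K(\nu^\star)=\{J\in\Pset{n}:\,H((\nu^\star)^{(\Pset{J})}\,|\,\mu^{(\Pset{J})})=\tfrac{|J|}{n}\log 2\}$. Granting that $K(\nu^\star)$ contains a maximal chain $T^\star\in\chset$, one rescales the multipliers along that chain into Parisi parameters $\boldsymbol{m}^\star$ (schematically, $1/m^\star_k-1/m^\star_{k+1}$ proportional to the aggregated multiplier mass at level $k$), and verifies $P^{T^\star}(\boldsymbol{m}^\star)=f(\phi)$ by unrolling the recursion \eqref{def_phik} level by level: at each step one uses the chain rule for $H(\cdot|\mu)$ along $T^\star$ together with the Gibbsian form of $\nu^\star$ coming from the first-order conditions, which tilts $\mu$ by $\phi$ and by the active-constraint derivatives.

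The main obstacle is the chain-extraction step: proving that $K(\nu^\star)$ always contains a maximal chain in $\Pset{n}$. This is precisely the ``onset of ultrametricity'' advertised in the introduction, and should follow by combining the data-processing inequality, i.e.\ the monotonicity of $H((\nu^\star)^{(\Pset{\cdot})}\,|\,\mu^{(\Pset{\cdot})})$ under marginalization, with the peculiar linear calibration of the thresholds $\tfrac{|J|}{n}\log 2$, which forces the binding constraints to align hierarchically. A greedy downward construction --- start from $A^{T^\star}_n=I$ and, for $k=n-1,\ldots,1$, pick an active subset $A^{T^\star}_k\subsetneq A^{T^\star}_{k+1}$ of cardinality $k$ --- is the natural candidate, but ensuring existence of such an active subset at every level, and then matching the resulting $(T^\star,\boldsymbol{m}^\star)$ against the Parisi recursion, is the delicate analytic-combinatorial point.
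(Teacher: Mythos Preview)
Your upper bound matches the paper's. For the lower bound the paper runs in the opposite direction: rather than starting from the nonhierarchical maximizer $\nu^\star$ and trying to extract a chain from its active set, it fixes the minimizing chain $\mathcal T=\arg\min_T f^T(\phi)$, takes its generalized Gibbs measure $G^{\mathcal T}(\boldsymbol m')$ with $\boldsymbol m'=\arg\min_{\boldsymbol m}P^{\mathcal T}(\boldsymbol m)$, and proves that $G^{\mathcal T}(\boldsymbol m')\in\bigcap_{J\in\Pset{n}}C_J$. This makes $G^{\mathcal T}(\boldsymbol m')$ feasible in \eqref{quenched_limit}, whence $f(\phi)\ge\int\phi\,dG^{\mathcal T}-H(G^{\mathcal T}\mid\mu)=f^{\mathcal T}(\phi)$ directly, with no KKT theory and no chain extraction. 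The verification of the extra constraints uses the identity $\partial_j P^{\mathcal T}(\boldsymbol m)=m_j^{-2}\bigl[H(G^{\mathcal T}_j\mid G^{\mathcal T}_{j-1}\otimes\mu^{(\mathcal T_j)})-\tfrac{\log 2}{n}\bigr]$, the monotonicity \eqref{AM}, and an induction on $|J|$; in the degenerate case $\partial_{k_J}P^{\mathcal T}(\boldsymbol m')>0$ one shows $m_1'=\cdots=m_{k_J}'$, so the first $k_J$ levels of the recursion \eqref{def_phik} collapse and the chain may be rerouted through $J$ without changing $P^{\mathcal T}(\boldsymbol m')$ or $G^{\mathcal T}(\boldsymbol m')$.

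The hypothesis you grant --- that $K(\nu^\star)$ contains a maximal chain --- is not just hard to verify, it is false in general. At high temperature no constraint binds and $K(\nu^\star)=\emptyset$; more instructively, if $\phi$ is invariant under permutation of the species then so is $\nu^\star$, and $K(\nu^\star)$ is a union of full levels $\{J:|J|=k\}$, so the KKT multipliers $\lambda_J^\star$ are spread symmetrically and not supported on any single chain. Your greedy descent cannot select a canonical $A_k^{T^\star}$ in that situation. What ultimately rescues such cases is exactly the Parisi-parameter collapse $m_1'=\cdots=m_k'$ the paper exploits from the other side: under it the choice of chain below level $k$ is immaterial for $P^T$. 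But this is a \emph{conclusion} of the analysis, not an input, and it does not follow from the combinatorics of $K(\nu^\star)$ alone. To make your route go through you would need to prove that the support of $\lambda^\star$ can always be pushed onto a single chain without changing the Lagrangian value, and then carry out the match with \eqref{def_phik}; that work your sketch does not supply, and it is comparable in difficulty to the paper's argument.
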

In other words, the limiting free energy of the nonhierarchical Perceptron GREM is given by the minimum among the free energies associated to all chains in which the original model can be coarse-grained. The novelty compared to \cite{bk2} is thus the minimization in \eqref{free_en_Par} over all $T \in \mathcal C_n$, i.e. over all possible hierarchical structures: this feature is in full agreement with the Parisi theory for the more sophisticated mean field spin glasses like the SK-model, where the corresponding minimization is indeed over {\it all $q$-functions} \cite{mpv}.

\vskip0.5cm

\section{The Gibbs variational principle}

In this section we give a proof of Theorem \ref{teor_Gibbs}. For this we will closely follow the large deviation analysis of \cite{bk2} of the Perceptron GREM, which is flexible, and carries over to this more general setting quite swiftly. Lest the reader is overwhelmed with innumerous pointers and references, and given that the treatment is reasonably short, we will work out out all steps in detail. 

We first need some infrastructure: for an open subset $U\subset \totprosp$ we will denote by $M_N(U)$ the random variable counting the number of $\alpha$-s for which the corresponding empirical distribution \eqref{empirical_distr} lies in $U$, to wit:
$$M_N(U) = \#\{ \alpha=1,\dots,2^N : L_{N,\alpha} \in U \}.$$

The following two lemmata state some well known properties of relative entropies.  The short and elementary proofs are given for completeness.

\begin{lem} \label{relen_basic_prop}
Let $S$ be a Polish space, $d\in\mathbb{N}$, $\nu,\mu \in \fprosp{d}$ with $\nu\ll\mu$, $\mu=\bigotimes_{k=1}^d \mu_k$ for some $\mu_k\in\prosp$, $k\in\{1,\cdots,d\}$. Then for every nonempty $B\subseteq \{1,\cdots,d\}$, \begin{itemize}
\item[] 
\begin{equation} \label{chain_rule}
\relen{\nu}{\mu} = H\left( \, \nu^{(B)} \, |  \,  \mu^{(B)}   \, \right) + H\left( \,  \nu   \,   |  \, \nu^{(B)}\otimes \mu^{(B^c)}    \, \right)
\end{equation}

\item[] \begin{equation}\label{AM}
H\left(  \nu^{(B)} \,|\,  \mu^{(B)}   \right) \leq H\left( \nu \, | \, \mu \right) 
\end{equation}


\end{itemize}
where $B^c = \{1,\cdots,d\}\setminus B$.
\end{lem}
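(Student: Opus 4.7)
The plan is to establish the chain rule \eqref{chain_rule} first, via a direct factorization of Radon-Nikodym derivatives, and then to deduce the monotonicity \eqref{AM} as an immediate corollary.

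For \eqref{chain_rule}, I would start by noting that since $\mu = \bigotimes_{k=1}^d \mu_k$ is a product, one has $\mu = \mu^{(B)} \otimes \mu^{(B^c)}$, and the hypothesis $\nu \ll \mu$ automatically forces $\nu^{(B)} \ll \mu^{(B)}$. Hence the intermediate measure $\nu^{(B)} \otimes \mu^{(B^c)}$ is absolutely continuous with respect to $\mu$, with density $\bigl( d\nu^{(B)}/d\mu^{(B)} \bigr) \circ \pi_B$, which depends only on the $B$-coordinates. Factorizing
\[
\frac{d\nu}{d\mu} = \frac{d\nu}{d(\nu^{(B)} \otimes \mu^{(B^c)})} \cdot \frac{d\nu^{(B)}}{d\mu^{(B)}} \circ \pi_B,
\]
taking logarithms, and integrating against $\nu$ yields the identity. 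The second summand reduces to $H(\nu^{(B)}|\mu^{(B)})$ because its integrand depends only on the $B$-coordinates, so its integral against $\nu$ collapses to the corresponding integral against the marginal $\nu^{(B)}$.

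For \eqref{AM}, the inequality reads off directly from \eqref{chain_rule}: both summands on the right-hand side are non-negative (relative entropy is non-negative by Jensen's inequality applied to the convex function $-\log$), so discarding the term $H(\nu \,|\, \nu^{(B)} \otimes \mu^{(B^c)}) \geq 0$ gives \eqref{AM}. If one wanted to drop the blanket assumption $\nu \ll \mu$, the monotonicity would still hold: should $\nu$ fail to be absolutely continuous with respect to $\mu$, the right-hand side of \eqref{AM} is infinite and there is nothing to prove.

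I expect no serious obstacle. The main technical care is required in justifying the Radon-Nikodym factorization cleanly, which is a routine Fubini-type computation exploiting the product structure of $\mu$; the rest is bookkeeping.
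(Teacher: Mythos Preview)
Your proposal is correct and follows essentially the same route as the paper: factor $d\nu/d\mu$ through the intermediate measure $\nu^{(B)}\otimes\mu^{(B^c)}$, integrate the logarithm against $\nu$, and observe that the $B$-only piece collapses to an integral against $\nu^{(B)}$. The paper reaches the same factorization via the disintegration theorem (writing $\nu=\nu^{(B)}\otimes\nu^{(B^c\mid B)}$ and then identifying $d\nu^{(B^c\mid B)}/d\mu^{(B^c)}$ with $d\nu/d(\nu^{(B)}\otimes\mu^{(B^c)})$), whereas you go there directly; both then deduce \eqref{AM} from \eqref{chain_rule} and nonnegativity of relative entropy.
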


\begin{proof}
The second claim \eqref{AM} is an immediate consequence of \eqref{chain_rule}, together with the positivity of relative entropies. In order to see the first claim, let $\{S_k\}_{k=1}^d$ be $d$ copies of $S$ and $B=\{b_1,\cdots,b_M\}$, $B^c=\{a_1,\cdots,a_{d-M}\}$. 
Since the product space of Polish spaces is Polish, specifically Radon,the disintegration Theorem holds for every $\nu\in\fprosp{d}$ and we can define the conditional distribution $\nu^{(B^c|B)}$ of $\nu$ on $S_{a_1} \times \cdots \times S_{a_{d-M}}$ given the projection on $S_{b_1}\times \cdots \times S_{b_M}$. Expressing $\nu = \nu^{(B)} \otimes \nu^{(B^c \mid B)}$ as a semi-direct product, it holds 
\beq \bea \label{comi}
\relen{\nu}{\mu}  = \int \log\left(\frac{d\nu}{d\mu}\right) d\nu &= \int  \log\left(\frac{d\nu^{(B)}}{d\mu^{(B)}} \right) d\nu + \int \log \left( \frac{d\nu^{(B^c|B)}}{d\mu^{(B^c)}} \right) d\nu \\
& = H\left( \, \nu^{(B)} \, |  \,  \mu^{(B)}   \, \right)+  \int \log \left( \frac{d\nu^{(B^c|B)}}{d\mu^{(B^c)}} \right) d\nu \,.
\eea \eeq
We now focus on the second term on the r.h.s. above: by well-known properties of Radon-Nikodym derivatives\footnote{This step is somewhat informal, but see e.g. \cite{polyanski}.} we may write
\beq \bea \label{comi_uno}
\frac{d\nu^{(B^c|B)}}{d\mu^{(B^c)}} & = \frac{d\nu^{(B)}}{d\nu^{(B)}} \cdot \frac{d\nu^{(B^c|B)}}{d\mu^{(B^c)}} = \frac{d \nu}{d \left( \nu^{(B)} \otimes \mu^{(B^c)}\right)} \,,
\eea \eeq
and therefore 
\beq \label{comi_due}
 \int \log \left( \frac{d\nu^{(B^c|B)}}{d\mu^{(B^c)}} \right) d\nu = \int \log\left( \frac{d \nu}{d \left( \nu^{(B)} \otimes \mu^{(B^c)}\right)} \right) d\nu = H\left( \,  \nu   \,   |  \,\nu^{(B)}\otimes \mu^{(B^c)}    \, \right).
\eeq
Plugging this in \eqref{comi} then yields \eqref{chain_rule}.

\end{proof}

\begin{lem}(Semicontinuity of relative entropies) \label{semicont}
With the notation of the previous Lemma, the functions $\nu \to \relen{\nu}{\mu}$, $\nu \to H\left( \,  \nu   \,   |  \,   \nu^{(B)}\otimes \mu^{(B^c)}  \, \right)$ are lower semicontinuous in the weak topology.
\end{lem}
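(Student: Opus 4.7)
The plan is to establish both claims from the Donsker--Varadhan variational representation of relative entropy: for any probability measures $\nu,\lambda$ on a Polish space,
\beq \label{DV_rep}
H(\nu \,|\, \lambda) \;=\; \sup_{f \in C_b(S^d)} \left\{ \int f \, d\nu - \log \int e^f \, d\lambda \right\},
\eeq
which writes the entropy as the supremum of functionals that are affine in $\nu$, up to a $\lambda$-dependent additive constant. Once \eqref{DV_rep} is in hand, lower semicontinuity in $\nu$ reduces to checking that, for each fixed $f \in C_b(S^d)$, the functional inside the braces depends continuously on $\nu$ in the weak topology.

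For the first statement the reference measure $\mu$ is fixed, so the term $\log\int e^f\,d\mu$ is a constant in $\nu$ and only $\int f\,d\nu$ carries the dependence; this is continuous by the very definition of the weak topology. A supremum of continuous functions is lower semicontinuous, and the first claim follows.

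For the second statement the reference measure $\lambda(\nu) \ugdef \nu^{(B)} \otimes \mu^{(B^c)}$ itself varies with $\nu$, so the additional step I would carry out is to verify that
$$\nu \;\longmapsto\; \log \int e^f \, d\bigl(\nu^{(B)} \otimes \mu^{(B^c)}\bigr)$$
is still continuous. By Fubini (legitimate since $e^f$ is bounded) this integral equals $\int g\,d\nu^{(B)}$, where $g(x_B) \ugdef \int e^{f(x_B,x_{B^c})}\,\mu^{(B^c)}(dx_{B^c})$ is bounded and continuous in $x_B$: boundedness is immediate, and continuity follows from dominated convergence together with $f \in C_b(S^d)$. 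Since $h \circ \pi_B \in C_b(S^d)$ for every $h \in C_b(S^{|B|})$, the marginal map $\nu \mapsto \nu^{(B)}$ is continuous in the weak topology, whence $\nu \mapsto \int g\,d\nu^{(B)}$ is continuous and so is its logarithm. Feeding this back into \eqref{DV_rep} displays $\nu \mapsto H(\nu \,|\, \nu^{(B)}\otimes \mu^{(B^c)})$ as a supremum of continuous functions of $\nu$, hence as a lsc functional.

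The main obstacle is a purely technical one: one needs the Donsker--Varadhan supremum to be restricted to $f \in C_b(S^d)$ rather than to all bounded measurable $f$. The bound $H(\nu\,|\,\lambda) \geq \sup_{C_b}\{\cdots\}$ is automatic, and the reverse inequality is a standard density argument in the Polish setting (for instance via Lusin's theorem plus truncation). All remaining manipulations are routine applications of dominated convergence and of the definition of weak convergence.
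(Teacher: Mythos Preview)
Your proposal is correct and follows essentially the same route as the paper: both invoke the Donsker--Varadhan variational representation over bounded continuous functions and deduce lower semicontinuity from the continuity of each functional inside the supremum. You supply more detail than the paper does on why $\nu \mapsto \log \int e^f \, d(\nu^{(B)}\otimes\mu^{(B^c)})$ is continuous (the Fubini reduction to $\int g\,d\nu^{(B)}$ and the dominated-convergence check that $g \in C_b$), whereas the paper simply asserts this continuity; but the underlying argument is the same.
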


\begin{proof}
It is well known that for relative entropies the following representations hold:
\begin{equation} \label{repr_rel_en}
\relen{\nu}{\mu} = \sup\limits_{u\in \mathcal{U}} \left[ \int u d\nu -\log \int e^ud\mu\right] ,
\end{equation}
where $\mathcal{U}$ is the set of all bounded continous functions $S^d\to \mathbb{R}$, and analogously for
\beq \label{repr_rel_ent}
H\left( \,  \nu   \,   |  \,   \nu^{(B)}\otimes \mu^{(B^c)}  \, \right) = \sup\limits_{u\in \mathcal{U}} \left[ \int u d\nu -\log \int e^ud\left( \nu^{(B)}\otimes \mu^{(B^c)} \right)\right].
\end{equation}
But for $u\in\mathcal{U}$, the functionals $$\nu\to\int u d\nu, \quad \nu\to\log \int e^u d\mu,\quad \nu\to\log \int e^ud\left( \nu^{(B)}\otimes \mu^{(B^c)} \right)$$ 
are all continuous: the claim of the Lemma thus steadily follows from this observation and the representations
\eqref{repr_rel_en} and \eqref{repr_rel_ent}.
\end{proof}

The next technical result is a generalization of \cite[Lemma 3.1]{bk2}.

\begin{lem}(Relative Sanov Theorem) \label{relative_Sanov}
Let $\alpha=(\alpha_1,\cdots,\alpha_n)$, $\alpha'=(\alpha'_1,\cdots,\alpha'_n)$ be two configurations and $L_{N,\alpha}, L_{N,\alpha'}$ the associated empirical distributions defined in \eqref{empirical_distr}. Assume that for $A\in \Pset{n},$ 
$$\alpha_j=\alpha'_j,\,\; j\in A\; \qquad \text{and} \qquad\alpha_j\neq\alpha'_j, \; j\in I\setminus A.$$
Then the couple $(L_{N,\alpha}, L_{N,\alpha'})$ satisfies a LDP with good rate function 
\begin{equation}
\mathfrak J_A(\nu,\theta)= \relen{ \nu^{ \left( \Pset{A} \right) } }{ \mu^{ \left( \Pset{A} \right)} }+  H\left( \,  \nu   \,   |  \,  \nu^{\left( \Pset{A}\right)}\otimes \mu^{\left(\Pset{A}^c \right)}   \, \right)+ H\left( \,  \theta   \,   |  \,  \theta^{\left( \Pset{A}\right)}\otimes \mu^{\left(\Pset{A}^c \right)}   \, \right) ,
\end{equation}
if $\nu^{ \left( \Pset{A} \right)} = \theta^{ \left( \Pset{A} \right)}$ (and $=\infty$ otherwise), where $\mathcal P_A^c=\Pset{n}\setminus \Pset{A}$.
\end{lem}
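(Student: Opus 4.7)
The strategy is to recognise $(L_{N,\alpha}, L_{N,\alpha'})$ as the image, under continuous marginalisations, of the empirical measure of an i.i.d.\ sample in a larger product space, and then apply Sanov's theorem together with the contraction principle.

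First I separate the indices in $\Pset{n}$ according to how the two configurations interact. For $J\in\Pset{A}$ one has $J\subseteq A$, so $\alj=\alpha'_J$ and the two configurations share the sub-energy $X^J_{\alj,i}=X^J_{\alpha'_J,i}$. For $J\in\Pset{A}^c=\Pset{n}\setminus\Pset{A}$ the set $J$ contains at least one index outside $A$, so $\alj\neq\alpha'_J$, and the sub-energies $X^J_{\alj,i}$ and $X^J_{\alpha'_J,i}$ are independent and both $\mu_J$-distributed. Collecting accordingly,
\begin{equation*}
U_i \ugdef \bigl(X^J_{\alj,i}\bigr)_{J\in\Pset{A}}, \quad V_i \ugdef \bigl(X^J_{\alj,i}\bigr)_{J\in\Pset{A}^c}, \quad V'_i \ugdef \bigl(X^J_{\alpha'_J,i}\bigr)_{J\in\Pset{A}^c},
\end{equation*}
the triples $(U_i,V_i,V'_i)_{i\leq N}$ are i.i.d.\ on the Polish space $S^{|\Pset{A}|}\times S^{|\Pset{A}^c|}\times S^{|\Pset{A}^c|}$ with common law $\tilde\mu \ugdef \mu^{(\Pset{A})}\otimes \mu^{(\Pset{A}^c)}\otimes\mu^{(\Pset{A}^c)}$. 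By the classical Sanov theorem, their empirical measure $\hat L_N$ satisfies a LDP in the weak topology with good rate function $\rho\mapsto \relen{\rho}{\tilde\mu}$. Now $L_{N,\alpha}$ is the image of $\hat L_N$ under marginalisation onto blocks $(1,2)$ (followed by the reordering dictated by \eqref{vec_T}) and $L_{N,\alpha'}$ the image under marginalisation onto blocks $(1,3)$; both maps are continuous in the weak topology, so the contraction principle yields a LDP for the pair with rate
\begin{equation*}
\mathfrak J_A(\nu,\theta) \;=\; \inf\bigl\{\, \relen{\rho}{\tilde\mu}:\; \rho^{(1,2)}=\nu,\; \rho^{(1,3)}=\theta\,\bigr\}.
\end{equation*}

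It remains to evaluate this infimum using the tools of Lemma \ref{relen_basic_prop}. Finiteness forces $\nu^{(\Pset{A})}=\rho^{(1)}=\theta^{(\Pset{A})}$; I denote this common marginal by $\pi$. For the lower bound, one application of \eqref{chain_rule} with $B=\{1,2\}$ gives
\begin{equation*}
\relen{\rho}{\tilde\mu} \;=\; H\bigl(\nu\,\big|\,\mu^{(\Pset{A})}\otimes\mu^{(\Pset{A}^c)}\bigr) \;+\; H\bigl(\rho\,\big|\,\nu\otimes\mu^{(\Pset{A}^c)}\bigr);
\end{equation*}
a second application of \eqref{chain_rule}, now with $B=\{1\}$, to the first summand yields $H(\pi|\mu^{(\Pset{A})}) + H(\nu|\pi\otimes\mu^{(\Pset{A}^c)})$, while the monotonicity \eqref{AM} applied to the second summand with $B=\{1,3\}$ (whose marginals of $\nu\otimes\mu^{(\Pset{A}^c)}$ equal $\pi\otimes\mu^{(\Pset{A}^c)}$) gives $H(\rho|\nu\otimes\mu^{(\Pset{A}^c)}) \geq H(\theta|\pi\otimes\mu^{(\Pset{A}^c)})$. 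Summing the three contributions reproduces $\mathfrak J_A(\nu,\theta)$ as in the statement. For the matching upper bound I exhibit the conditionally independent coupling $\rho^{\star} \ugdef \pi\otimes \nu^{(2|1)}\otimes\theta^{(3|1)}$, built from the disintegrations of $\nu$ and $\theta$ along $\pi$, which exist because products of Polish spaces are Radon. A direct computation mirroring \eqref{comi_uno}--\eqref{comi_due} shows that each of the inequalities above becomes an equality at $\rho^{\star}$.

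The main obstacle is not deep but organisational: one has to juggle three coordinate blocks and apply the chain rule along two different splits. The conceptual input---that conditional independence given the shared block $\Pset{A}$ minimises the joint entropy among couplings with the prescribed pair of marginals---is essentially the content of the monotonicity \eqref{AM}.
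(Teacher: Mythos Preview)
Your argument is correct and follows essentially the same route as the paper: embed $(L_{N,\alpha},L_{N,\alpha'})$ as a continuous image of a single empirical measure in the enlarged space, apply Sanov plus the contraction principle, and identify the resulting infimum via the conditionally independent coupling $\rho^\star=\pi\otimes\nu^{(2\mid 1)}\otimes\theta^{(3\mid 1)}$.

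One small caveat on the lower bound: when you invoke \eqref{AM} for $H(\rho\,|\,\nu\otimes\mu^{(\Pset{A}^c)})$ with $B=\{1,3\}$, the reference measure is not a full product over coordinates (block~$2$ is tied to block~$1$ through $\nu$), so Lemma~\ref{relen_basic_prop} as stated does not literally apply; what you are really using is the general data-processing inequality for relative entropy, which of course gives the claimed bound. The paper avoids this wrinkle by writing, for any admissible $\rho$, the identity $H(\rho\,|\,\tilde\mu)=H(\rho\,|\,\rho^\star)+\int \log\frac{d\rho^\star}{d\tilde\mu}\,d\rho$ and observing that the integrand is a sum of functions of $(x_1,x_2)$ and $(x_1,x_3)$, so the last integral equals $H(\rho^\star\,|\,\tilde\mu)=\mathfrak J_A(\nu,\theta)$ for \emph{every} $\rho$ with the prescribed marginals; nonnegativity of $H(\rho\,|\,\rho^\star)$ then yields the lower bound in one stroke.
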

\begin{proof}
Let $S$ be a Polish space, $d\in\mathbb{N}$, 
$$\mathcal{P}'=\{b_1,\cdots,b_M\}\subseteq \{1,\cdots,d\}, \mathcal{P}'\neq\emptyset,$$
 and $\{\boldsymbol{X}_i\}$, $\{\boldsymbol{Y}_i\}$ two independent families of indendent random variables with 
$$\boldsymbol{X}_i \sim \mu \in \bigotimes\limits_{k=1}^d\mathcal{M}^+_1(S_k),
\qquad \boldsymbol{Y}_i \sim \mu' \in \bigotimes\limits_{l=1}^{M}\mathcal{M}^+_1(S'_{b_l}),$$
where each $S_k,S'_{b_l}$ is a copy of $S$.\\We consider the empirical distribution
\begin{equation} \label{M_N_emp_distr}
M_N = \frac{1}{N}\sum_{i=1}^N \delta_{(\boldsymbol{X}_i,\boldsymbol{Y}_i)}
\end{equation}
associated to vectors $(\boldsymbol{X}_i, \boldsymbol{Y}_i)\in S_1\times\cdots\times S_d \times S'_{b_1}\times\cdots\times S'_{b_M}$
and the natural projections 
\beq \label{projs}
\bea
  \pi & : S_1\times\cdots\times S_d \times S'_{b_1}\times\cdots\times S'_{b_M} \longrightarrow S_1\times \cdots \times S_d,\\
 \pi' &: S_1\times\cdots\times S_d \times S'_{b_1}\times\cdots\times S'_{b_M} \longrightarrow \tilde{S}_1\times \cdots  \times \tilde{S}_d,
\eea 
\eeq 
where
$$ \tilde{S}_k \ugdef
\begin{cases}
S_k\quad& \text{if}\quad k \in \mathcal{P} \\
S'_k\quad& \text{if}\quad k\notin \mathcal{P} \\
\end{cases}\quad,\qquad \Pset{} \ugdef \{1,\cdots,d\} \setminus \mathcal{P}' = \{ c_1, c_2, \cdots, c_{d-M} \}.$$
We define 
$$ (L_N, R_N) \ugdef M_N\circ (\pi,\pi')^{-1},$$
\vskip0.2cm
which is a random element in $\mathcal M_1^+(S_1\times\cdots\times S_d)\times \mathcal M_1^+(\tilde{S}_1\times\cdots\times \tilde{S}_d)$, and 
\beq 
\mathfrak J(\nu,\theta) \ugdef \begin{cases}
\relen{ \nu^{ \left( \Pset{} \right) } }{ \mu^{ \left( \Pset{} \right)} }+  H\left( \,  \nu   \,   |  \,  \nu^{\left( \Pset{}\right)}\otimes \mu^{\left(\Pset{}' \right)}   \, \right)+ H\left( \,  \theta    \,   |  \,  \theta^{\left( \Pset{}\right)}\otimes \mu'  \, \right) & \mbox{if} \quad \nu^{ \left( \Pset{} \right)}=\theta^{ \left( \Pset{} \right)} \\
\infty & \mbox{otherwise.}\end{cases}
\eeq
Applying Sanov's Theorem to \eqref{M_N_emp_distr}, and by continuous projection, we obtain that the couple $(L_N,R_N)$ satisfies a LDP with good rate function
$$\mathfrak J'(\nu,\theta) = \inf\limits_{\rho}\left\{ H\left(\, \rho \, | \, \mu\otimes\mu' \,\right) : \quad \rho\pi^{-1} = \nu, \quad \rho\pi'^{-1} = \theta \right\}.$$
Since the projections \eqref{projs} coincide on coordinates indexed in $\mathcal{P}$, for all \\
$\rho\in\mathcal{M}^+_1\left( S_1\times\cdots\times S_d \times S'_{b_1}\times\cdots\times S'_{b_M} \right)$ it holds $(\rho\pi^{-1} )^{(\Pset{})} = (\rho\pi'^{-1})^{(\Pset{})}$ and the following implication is immediate 
\[
\nu^{(\Pset{})}\neq \theta^{(\Pset{})} \Longrightarrow \mathfrak J'(\nu,\theta)=\infty.
\] 
 We may thus henceforth assume $\nu^{(\Pset{})} = \theta^{(\Pset{})}$, and consider $\tilde{\rho}=\tilde{\rho}(\nu,\theta)$ with marginal $\nu^{(\Pset{})}=\theta^{(\Pset{})}$ on $S_{c_1}\times\cdots \times S_{c_{d-M}}$, and conditional distribution on $S_{b_1}\times\cdots \times S_{b_{M}} \times S'_{b_{1}}  \times \cdots \times S'_{b_{M}}$ given the projection on $S_{c_1}\times\cdots \times S_{c_{d-M}}$  as the product of the conditional distribution of $\nu$ on $S_{b_1}\times\cdots\times S_{b_M} $ given the projection on $S_{c_1}\times\cdots \times S_{c_{d-M}}$, and of the conditional distribution $\theta$ on $S'_{b_1}\times\cdots\times S'_{b_M} $ given the projection on $S_{c_1}\times\cdots \times S_{c_{d-M}}$. In which case, applying \eqref{chain_rule}, we easily obtain
$$H\left( \,  \tilde{\rho}   \,   |  \,  \mu\otimes \mu'  \, \right)
=H\left( \,  \nu^{(\Pset{})}   \,   |  \,  \mu^{(\Pset{})}   \, \right) + H\left( \,  \nu   \,   |  \,   \nu^{(\Pset{})}\otimes \mu^{(\Pset{}')}  \, \right)+H\left( \,  \theta   \,   |  \,   \theta^{(\Pset{})}\otimes \mu'  \, \right)$$
and therefore $\mathfrak J\geq \mathfrak J'$. Conversely, for any $\rho$ satisfying $\rho\pi^{-1}=\nu$, $\rho\pi'^{-1}=\theta$, we claim that 
\begin{equation} \label{claimo}
\mathfrak J(\nu,\theta)\leq \relen{\rho}{\mu\otimes\mu'}\,.
\end{equation} 
(Remark that we can assume that the right hand side is finite). To see this, we first write
$$H\left( \,  \rho   \,   |  \,  \mu\otimes\mu'   \, \right)=H\left( \,  \rho   \,   |  \,  \tilde{\rho}(\nu,\theta)   \, \right)+\int d\rho \log \frac{d \tilde{\rho}(\nu,\theta)}{d( \mu\otimes\mu')}.$$
The first term is non-negative, while the second equals
$$\int d\tilde{\rho}(\nu,\theta) \log \frac{d \tilde{\rho}(\nu,\theta)}{d( \mu\otimes\mu')} = \mathfrak J(\nu,\theta),$$ and \eqref{claimo} follows. The statement of the Lemma corresponds then to the case 
\[
d=2^n-1, \qquad \Pset{}'=\mathcal P_A^c.
\]

\end{proof}

\begin{prop} \label{prop_1_G}
Let $\nu\in\fprosp{2^n-1}$ be such that $\relen{\nu}{\mu} < \infty$, $\epsilon >0$ and $V$ an open neighborhood of $\nu$. Then there exists an open neighborhood $U$ of $\nu$, $U\subset V$, and $\delta >0$ for which
$$ \prob\left[ M_N(U) \geq e^{N(\log2-\relen{\nu}{\mu}+\epsilon)}\right]\leq e^{-\delta N}.$$
\end{prop}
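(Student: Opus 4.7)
The plan is a straightforward first-moment estimate: we will bound $\att[M_N(U)]$ and then invoke Markov's inequality. For any fixed configuration $\alpha$ the random vectors $\{X_{\alpha,i}\}_{i=1}^N$ are i.i.d.\ with common law $\mu$ on $S^{2^n-1}$, so $L_{N,\alpha}$ is simply the empirical measure of $N$ i.i.d.\ $\mu$-samples; the classical Sanov theorem on $\totprosp$ therefore yields
$$\limsup_{N\to\infty}\frac{1}{N}\log \prob\left[ L_{N,\alpha}\in F \right] \;\leq\; -\inf_{\rho\in F}\relen{\rho}{\mu}$$
for every closed $F\subset \totprosp$, which is the only probabilistic ingredient needed.

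The first preparatory step is to shrink $V$ to a neighborhood on which $\relen{\cdot}{\mu}$ cannot drop much below $\relen{\nu}{\mu}$. Using the lower semicontinuity established in Lemma \ref{semicont}, the sublevel set $K\ugdef\{\rho:\relen{\rho}{\mu}\leq \relen{\nu}{\mu}-\epsilon/2\}$ is closed; since $\relen{\nu}{\mu}<\infty$ by hypothesis, $K$ does not contain $\nu$. The Polish (hence normal) space $\totprosp$ then allows to separate $\{\nu\}$ and $K$ by disjoint open sets, which produces an open neighborhood $W\ni\nu$ whose closure satisfies $\bar W\cap K=\emptyset$. Setting $U\ugdef W\cap V$ one obtains an open $U\subset V$ containing $\nu$ with
$$\inf_{\rho\in\bar U}\relen{\rho}{\mu}\;\geq\; \relen{\nu}{\mu}-\epsilon/2.$$

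Sanov's upper bound then provides, for all $N$ sufficiently large, $\prob[L_{N,\alpha}\in U]\leq \prob[L_{N,\alpha}\in\bar U]\leq \exp\!\left(-N(\relen{\nu}{\mu}-3\epsilon/4)\right)$. Since the $L_{N,\alpha}$'s are identically distributed, linearity of expectation yields
$$\att[M_N(U)] \;=\; 2^N\, \prob[L_{N,1}\in U] \;\leq\; \exp\!\left(N\left(\log 2-\relen{\nu}{\mu}+3\epsilon/4\right)\right),$$
and Markov's inequality delivers
$$\prob\!\left[ M_N(U)\geq \exp\!\left(N(\log2-\relen{\nu}{\mu}+\epsilon)\right) \right]\;\leq\; e^{-N\epsilon/4}.$$
So $\delta=\epsilon/4$ works, up to possibly shrinking $\delta$ to absorb the finitely many small-$N$ cases where the implicit $o(N)$ in Sanov is not yet negligible. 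There is no real obstacle here: only a first moment of $M_N(U)$ is needed for an upper-tail bound of this kind, and it is precisely the matching \emph{lower} bound on $M_N$ that will require the second-moment machinery provided by the relative Sanov theorem of Lemma \ref{relative_Sanov}, where the covariance structure of the $L_{N,\alpha}$'s across overlapping configurations enters the analysis in a non-trivial way.
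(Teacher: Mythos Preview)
Your argument is correct and follows essentially the same route as the paper: shrink the neighborhood using lower semicontinuity of $\relen{\cdot}{\mu}$ so that the infimum over its closure stays within $\epsilon/2$ of $\relen{\nu}{\mu}$, apply the Sanov upper bound to control $\att[M_N(U)]$, and finish with Markov's inequality, obtaining $\delta=\epsilon/4$. The only cosmetic difference is that the paper produces $U$ by taking a sufficiently small metric ball $B_{r_k}(\nu)$ via a shrinking-radius argument, whereas you invoke regularity to separate $\nu$ from the closed sublevel set $\{\rho:\relen{\rho}{\mu}\leq\relen{\nu}{\mu}-\epsilon/2\}$; both achieve the same control $\inf_{\rho\in\bar U}\relen{\rho}{\mu}\geq\relen{\nu}{\mu}-\epsilon/2$.
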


\begin{proof}
By the semicontinuity property of the relative entropy, given $B_r(\nu)$ a family of open balls with radius $r>0$, one has 
$$\inf\limits_{\rho\in B_r(\nu)} \relen{\rho}{\mu} \to \relen{\nu}{\mu}\qquad\text{as}\quad r\to 0.$$
Specifically, we can extract a sequence $\{r_k\}_{k\in\mathbb{N}}$ with $r_k\to 0$ as $k\to \infty$ so that 
$$\inf\limits_{\rho\in B_r(\nu)} \relen{\rho}{\mu} = \inf\limits_{\rho\in \overline{B_r}(\nu)} \relen{\rho}{\mu}\to \relen{\nu}{\mu}.$$ 
Therefore for every $\epsilon>0$, $V$ we may find $k \in \mathbb N$ for which
\begin{equation} \label{quasi_fin}
\overline{B_{r_k}}(\nu)\subset V\quad\text{and}\quad\inf\limits_{\rho\in B_{r_k}(\nu)} \relen{\rho}{\mu} = \inf\limits_{\rho\in \overline{B_{r_k}}(\nu)} \relen{\rho}{\mu} \geq \relen{\nu}{\mu} - \frac{\epsilon}{4}\,.
\end{equation}
All in all, 
$$\att \left[ M_N(B_{r_k}(\nu)) \right]= \sum\limits_{\alpha=1}^{2^N} \prob\left[  L_{N,\alpha} \in B_{r_k}(\nu)  \right]\leq 2^N \exp\left[ -N\left(\relen{\nu}{\mu}-\frac{\epsilon}{2}\right) \right],$$
the last step by Sanov's theorem together with \eqref{quasi_fin}.  \\

\noindent The claim of the Proposition steadily follows from Markov's inequality, and $\delta=\epsilon / 4$.
\end{proof}

\begin{prop} \label{prop_2_G}
Let $\nu\in\fprosp{2^n-1}$ be such that for some $J\in\Pset{n}$, $H\left( \,  \nu^{(\Pset{J})}   \,   |  \,   \mu^{(\Pset{J})}  \, \right) > |J|\log2/n$ and $V$ an open neighborhood of $\nu$. Then there exists an open neighborhood $U$ of $\nu$, $U\subset V$, and $\delta >0$ such that for $N$ large enough
$$ \prob\left[ M_N(U) \neq 0 \right]\leq e^{-\delta N}.$$
\end{prop}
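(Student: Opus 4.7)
The plan is to exploit the crucial counting economy that emerges when one looks at the marginal of the empirical measure on the coordinates indexed by $\Pset{J}$: such a marginal $L_{N,\alpha}^{(\Pset{J})}$ depends on $\alpha$ only through the subconfiguration $\alpha_J=(\alpha_j)_{j\in J}$, of which there are only $2^{|J|N/n}$ many, well below the naive $2^N$. Combined with the hypothesis $H(\nu^{(\Pset{J})}|\mu^{(\Pset{J})})>|J|\log 2/n$ and a direct union bound, this will produce the claimed exponential decay.

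First I would fix $\eta>0$ small enough that $H(\nu^{(\Pset{J})}|\mu^{(\Pset{J})})-2\eta > |J|\log 2/n$. By the lower semicontinuity of the relative entropy (Lemma \ref{semicont}) one may select a closed ball $\overline{B_{r'}}(\nu^{(\Pset{J})})$ on which
\[
\inf_{\rho\in\overline{B_{r'}}(\nu^{(\Pset{J})})} H(\rho\,|\,\mu^{(\Pset{J})})\geq H(\nu^{(\Pset{J})}|\mu^{(\Pset{J})})-\eta;
\]
then, using the weak continuity of the natural projection $\pi_{\Pset{J}}$, I would shrink an open ball around $\nu$ to obtain $U\subset V$ such that $L_{N,\alpha}\in U$ forces $L_{N,\alpha}^{(\Pset{J})}\in \overline{B_{r'}}(\nu^{(\Pset{J})})$.

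For each fixed $\alpha_J$ the restricted random vectors $(X^{J'}_{\alpha_{J'},i})_{J'\in\Pset{J}}$ are i.i.d.\ in $i$ with law $\mu^{(\Pset{J})}$, so Sanov's theorem applied to the closed set $\overline{B_{r'}}(\nu^{(\Pset{J})})$ yields, for $N$ sufficiently large,
\[
\prob\bigl[\,L_{N,\alpha}^{(\Pset{J})}\in \overline{B_{r'}}(\nu^{(\Pset{J})})\,\bigr]\leq \exp\bigl[-N\bigl(H(\nu^{(\Pset{J})}|\mu^{(\Pset{J})})-2\eta\bigr)\bigr].
\]
Since this event is $\alpha_J$-measurable, a union bound over its $2^{|J|N/n}$ possible values gives
\[
\prob[M_N(U)\neq 0]\leq 2^{|J|N/n}\exp\bigl[-N\bigl(H(\nu^{(\Pset{J})}|\mu^{(\Pset{J})})-2\eta\bigr)\bigr]\leq e^{-\delta N},
\]
with $\delta \ugdef H(\nu^{(\Pset{J})}|\mu^{(\Pset{J})})-|J|\log 2/n-2\eta>0$ by our choice of $\eta$. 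There is no genuine obstacle here; the only point requiring care is the bookkeeping that lines up the entropy-semicontinuity estimate with the weak-topology projection, exactly as in the analogous step of \cite{bk2}.
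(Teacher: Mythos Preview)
Your proof is correct and follows essentially the same route as the paper: both arguments pass to the marginal empirical measure on $\Pset{J}$, exploit lower semicontinuity of relative entropy to find a neighborhood of $\nu^{(\Pset{J})}$ on which the entropy stays above $|J|\log 2/n$, and then combine Sanov's upper bound with a union bound over the $2^{|J|N/n}$ possible values of $\alpha_J$. The only cosmetic difference is that the paper defines $U$ directly as $V\cap\{\theta:\theta^{(\Pset{J})}\in W\}$ rather than invoking continuity of the projection to shrink a ball, but this changes nothing of substance.
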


\begin{proof}
Let $J\in\Pset{n}$ satisfy $H\left( \,  \nu^{(\Pset{J})}   \,   |  \,   \mu^{(\Pset{J})}  \, \right) > |J|\log2/n$. Similarly to the proof of Proposition \ref{prop_1_G} we can select a neighborhood $W$ of $\nu^{(\Pset{J})}\in\fprosp{|\Pset{J}|}$ with 
$$\inf\limits_{\rho\in\overline{W}} H\left( \rho \mid \mu^{(\mathcal P_J)}\right) = \inf\limits_{\rho\in W} H\left( \rho \mid \mu^{(\mathcal P_J)}\right) > \frac{|J|}{n}\log2 + 2\delta,$$
for some $\delta>0$. If we now define 
$$U\ugdef \left\{ \nu \in \totprosp:\,\, \nu\in V, \nu^{(\Pset{J})} \in W \right\},$$
it follows from union bounds/subadditivity that
\beq \bea
\prob\left[ M_N(U) \neq 0 \right]& =\prob\left[ \exists \alpha: \, L_{N,\alpha} \in U \right] \leq \prob\left[ \exists \alpha: \, (L_{N,\alpha})^{(\Pset{J})} \in W \right] \\
& \leq \sum\limits_{\alpha_J=1}^{2^{|J|N/n}} \prob\left[ \, (L_{N,\alpha})^{(\Pset{J})} \in W \right]  \\
& = 2^{\frac{|J|}{n}N} \prob\left[ \, (L_{N,\alpha})^{(\Pset{J})} \in W \right]\,,
\eea \eeq
the last line using that the empirical measures are identically distributed. Since the r.h.s. above is, by Sanov's theorem, {\it at most}
\beq \bea
2^{\frac{|J|}{n}N} \exp\left[ -N\left(  \inf\limits_{\rho\in\overline{W}} H\left( \rho \mid \mu^{(\mathcal P_J)}\right) - \delta \right) \right] \leq 2^{\frac{|J|}{n}N} 2^{-\frac{|J|}{n}N} e^{-\delta N}=e^{-\delta N},
\eea \eeq
the proof of the Proposition is concluded. 
\end{proof}

\begin{prop}\label{relevant_prop}
Assume that $\nu\in\totprosp$ satisfies the $C$-conditions \eqref{def_cond} \emph{strictly}, namely that $H\left( \nu^{(\Pset{J})}|\mu^{(\Pset{J})}\right) < |J| /n\log2, \, J \in \mathcal P_n$. Let $V$ be an open neighborhood of $\nu$, and $\epsilon>0$. Then there exists an open neighborhood $U$ of $\nu$ with $U\subset V$ and $\delta>0$ such that for large enough $N$ 
$$\prob\left[ M_N(U) \leq e^{N(\log2-\relen{\nu}{\mu}-\epsilon)}\right]\leq e^{-\delta N}.$$
\end{prop}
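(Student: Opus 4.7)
The plan is to apply the second moment method. The key (and perhaps surprising) point is that under the \emph{strict} $C$-conditions the variance $\var M_N(U)$ comes out exponentially smaller than $\att[M_N(U)]^2$, which in turn yields the exponentially small tail via Chebyshev's inequality. The heart of the matter is the analysis of $\att[M_N(U)^2]$, which I would perform by grouping pairs of spin configurations according to the set of indices on which they agree, and invoking the joint large deviation principle of Lemma \ref{relative_Sanov}.

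More concretely, set $\Delta_J\ugdef \tfrac{|J|}{n}\log 2-\relen{\nu^{(\Pset{J})}}{\mu^{(\Pset{J})}}>0$ and $\eta\ugdef \tfrac{1}{16}\min(\epsilon,\min_J\Delta_J)$. Using the lower semicontinuity of $\relen{\cdot}{\mu}$ (Lemma \ref{semicont}) and of the joint rate function $\mathfrak J_A$ (clear from its three-term decomposition in Lemma \ref{relative_Sanov}), I select an open neighborhood $U\subset V$ of $\nu$ with
\begin{equation*}
\inf_{\rho\in \overline U}\relen{\rho}{\mu}\geq \relen{\nu}{\mu}-\eta,\qquad \inf_{\substack{(\rho,\theta)\in \overline U^2\\ \rho^{(\Pset{A})}=\theta^{(\Pset{A})}}}\mathfrak J_A(\rho,\theta)\geq \mathfrak J_A(\nu,\nu)-\eta,\quad A\in\Pset{n}.
\end{equation*}
Sanov's lower bound combined with the identical distribution of $L_{N,\alpha}$ over $\alpha$ then yields $\att[M_N(U)]\geq 2^N\exp[-N(\relen{\nu}{\mu}+\eta)]$ for $N$ large.

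For the second moment I partition
\begin{equation*}
\att[M_N(U)^2]=\sum_{A\subseteq I}S_A,\qquad S_A\ugdef \sum_{\substack{(\alpha,\alpha'):\\ \{j:\alpha_j=\alpha'_j\}=A}}\prob[L_{N,\alpha},L_{N,\alpha'}\in U].
\end{equation*}
When $A=\emptyset$ the two vectors $X_{\alpha,i}$ and $X_{\alpha',i}$ share no disorder coordinate, so the empirical measures are independent and $S_\emptyset\leq 2^{2N}\prob[L_{N,1}\in U]^2=\att[M_N(U)]^2$. For nonempty $A\in\Pset{n}$ the pair count is bounded by $2^N\cdot 2^{(n-|A|)N/n}$, and two applications of the chain rule \eqref{chain_rule} inside the definition of $\mathfrak J_A$ yield the crucial identity
\begin{equation*}
\mathfrak J_A(\nu,\nu)=2\relen{\nu}{\mu}-\relen{\nu^{(\Pset{A})}}{\mu^{(\Pset{A})}}\geq 2\relen{\nu}{\mu}-\tfrac{|A|}{n}\log 2+\Delta_A,
\end{equation*}
the last inequality being the \emph{strict} $C$-condition. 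Putting the pair count together with Lemma \ref{relative_Sanov} gives $S_A\leq \att[M_N(U)]^2\cdot e^{-N(\Delta_A-4\eta)}\leq \att[M_N(U)]^2\cdot e^{-N\Delta_A/2}$ by the choice of $\eta$, and summing over the finitely many $A\in\Pset{n}$ produces $\var M_N(U)\leq C\att[M_N(U)]^2 e^{-\eta' N}$ with $\eta'=\tfrac12\min_J\Delta_J>0$.

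Chebyshev at $t=\att[M_N(U)]/2$ then yields $\prob[M_N(U)\leq \att[M_N(U)]/2]\leq 4Ce^{-\eta' N}$, and the choice $\eta<\epsilon/2$ ensures $\att[M_N(U)]/2\geq e^{N(\log 2-\relen{\nu}{\mu}-\epsilon)}$ for $N$ large. The one genuinely delicate step is the identity for $\mathfrak J_A(\nu,\nu)$: it is precisely the chain rule manipulation that exposes the exponential gap $\Delta_A>0$ between the raw combinatorial factor $2^{(n-|A|)N/n}$ in the pair count and the rate $\mathfrak J_A(\nu,\nu)$, and it is precisely this gap which collapses under non-strict $C$-conditions --- explaining why the Proposition has to postulate the strict inequalities.
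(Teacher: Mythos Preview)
Your proposal is correct and follows essentially the same approach as the paper's proof: both use the second moment method, partition pairs $(\alpha,\alpha')$ according to the agreement set $A=\{j:\alpha_j=\alpha'_j\}$, invoke independence for $A=\emptyset$ and the relative Sanov LDP (Lemma~\ref{relative_Sanov}) for $A\neq\emptyset$, and extract the exponential gap via the chain rule identity $\mathfrak J_A(\nu,\nu)=2\relen{\nu}{\mu}-\relen{\nu^{(\Pset{A})}}{\mu^{(\Pset{A})}}$ combined with the strict $C$-conditions. The constants are bookkept slightly differently, but the structure of the argument is identical.
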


\begin{proof}

We claim that for all $\epsilon>0$ there exists $\delta>0$ and $U\subset V$ an open neighborhood of $\nu$ for which
\begin{equation}\label{second_moment}
\var M_N(U) \leq e^{-2\delta N} \left[ \att M_N(U) \right]^2\,.
\end{equation}
Assuming this for the time being, the proposition follows steadily as an elementary application of Chebycheff's inequality: indeed, by Sanov's theorem,
\begin{equation} \label{Sanov_basic}
\att M_N(U) = 2^N\prob\left( L_{N,\alpha} \in U \right) \geq 2^N\exp\left( -N\relen{\nu}{\mu}-N\xi \,\right)\,,
\end{equation}
for $\xi>0$. Elementary manipulations then yield
\begin{equation} \label{sopra} \begin{aligned}
& \prob\left[ M_N(U) \leq 2^N\exp \left( -N\relen{\nu}{\mu} -2N\xi \right) \, \right] \\
& \quad \qquad = \prob\left[ M_N(U) - \att M_N(U) \leq e^{-N\xi } \,2^N  \exp \left( -N\relen{\nu}{\mu} -N\xi \right) - \att M_N(U)\, \right] \\
& \quad \qquad \stackrel{\eqref{Sanov_basic}}{\leq} \prob\left[ M_N(U) - \att M_N(U) \leq \left( e^{-N\xi } - 1 \right) \att M_N(U) \right] \\
& \quad \qquad \leq \prob\left[ M_N(U) - \att M_N(U) \leq -\frac{1}{2} \att M_N(U) \right], 
\end{aligned}
\end{equation}
for $N$ large enough. But the r.h.s. of \eqref{sopra} is {\it at most}
\begin{equation} \begin{aligned}
\prob\left[ | M_N(U) - \att M_N(U) | \geq \frac{1}{2} \att M_N(U) \right]  & \leq  4\frac{\var M_N(U) }{ \left[ \att M_N(U) \right]^2}\stackrel{\eqref{second_moment}}{ \leq} 4e^{-2N\delta} \leq e^{-\delta N}\,,
\end{aligned}
\end{equation}
settling the claim of the proposition. \\

\noindent It thus remains to prove \eqref{second_moment}. To see how this goes, we first observe that, under the assumptions of the proposition, there exists $\eta>0$ such that 
\begin{equation}\label{eta}
H\left(\,\nu^{(\Pset{J})}\,|\,\mu^{(\Pset{J})}\right) < \frac{|J|}{n}\log2-\eta.
\end{equation}
Moreover, we claim that for $J\in \Pset{n}$,
\begin{equation} \begin{aligned} \label{inf_3}
& \lim\limits_{r \to 0} 
\inf\limits_{ \substack{ \rho, \theta \in \overline{B}_{r}(\nu) \\ \rho^{(\Pset{J})}=\theta^{(\Pset{J})} }} \left\{ \relen{\rho}{\mu} + H\left( \,  \theta \, | \, \theta^{(\Pset{J})}\otimes \mu^{(\Pset{J}^c)}  \,\right) \right\} = \\
& \hspace{4cm} = \relen{\nu}{\mu} + H\left( \,  \nu \, | \, \nu^{(\Pset{J})}\otimes \mu^{(\Pset{J}^c)}  \,\right).
\end{aligned}\end{equation}
This can be checked through approriate upper- and lower bounds. The bound "$\leq$" is straightforward: simply plug in $\rho=\theta=\nu$ on the l.h.s. above. The bound "$\geq$" is a consequence of the semicontinuity properties of the relative entropies: in fact, by Lemma \ref{semicont}, one gets that for a sequence $(\rho_m,\theta_m)$ with $\rho_m^{(\Pset{J})}=\theta_m^{(\Pset{J})}$ and $\rho_m,\theta_m\to\nu$ it holds 
\begin{equation} \begin{aligned} 
\liminf\limits_{m\to \infty} \relen{\rho_m}{\mu} & \geq \relen{\nu}{\mu} , \\
\liminf\limits_{m\to \infty} H\left( \,  \theta_m \, | \, \theta_m^{(\Pset{J})}\otimes \mu^{(\Pset{J}^c)}  \,\right) & \geq H\left( \,  \nu \, | \, \nu^{(\Pset{J})}\otimes \mu^{(\Pset{J}^c)}  \,\right),
\end{aligned} \end{equation}
which settles the "$\geq$"-case, and thus \eqref{inf_3}. \\

\noindent By \eqref{inf_3}, for $r$ small enough such that $\overline B_r(\nu)\subset V, U=B_r(\nu)$ we have that for $J\in\Pset{n}$, it holds:
\begin{equation}  \label{bound_eta} \begin{aligned}
& \inf\limits_{ \substack{ \rho, \theta \in \overline{B}_{r}(\nu) \\ \rho^{(\Pset{J})}=\theta^{(\Pset{J})} }} \left\{ \relen{\rho}{\mu} + H\left( \,  \theta \, | \, \theta^{(\Pset{J})}\otimes \mu^{(\Pset{J}^c)}  \,\right) \right\} \\
& \hspace{3cm} \geq \relen{\nu}{\mu}  + H\left( \,  \nu \, | \, \nu^{(\Pset{J})}\otimes \mu^{(\Pset{J}^c)}  \,\right) - \frac{\eta}{2} \\
& \hspace{3cm} = \relen{\nu}{\mu} + \relen{\nu}{\mu}  - H\left( \,   \nu^{(\Pset{J})} \, | \,\mu^{(\Pset{J})}  \,\right) - \frac{\eta}{2}, 
\end{aligned} \end{equation}
the last step by the chain rule \eqref{chain_rule}. The r.h.s. above equals
\begin{equation} \begin{aligned}
& 2\relen{\nu}{\mu} - H\left( \, \nu^{(\Pset{J})} \, |  \,  \mu^{(\Pset{J})}   \, \right) - \frac{\eta}{2}  \geq  2\relen{\nu}{\mu} - \frac{|J|}{n}\log2 + \frac{\eta}{2}, 
\end{aligned} \end{equation}
the last step using \eqref{eta}. All in all, we have established that
\begin{equation} \label{eta_due}
\inf\limits_{ \substack{ \rho, \theta \in \overline U\\ \rho^{(\Pset{J})}=\theta^{(\Pset{J})} }} \left\{ \relen{\rho}{\mu} + H\left( \,  \theta \, | \, \theta^{(\Pset{J})}\otimes \mu^{(\Pset{J}^c)}  \,\right) \right\} \geq
2\relen{\nu}{\mu} - \frac{|J|}{n}\log2 + \frac{\eta}{2}\,.
\end{equation}
This inequality will play a crucial role in the control of the second moment of the $M_N$-counting randon variable which  stands behind \eqref{second_moment}: we begin by writing
\beq \bea \label{comincia}
\att M_N^2(U) & =\sum\limits_{\alpha,\alpha' = 1}^{2^N} \prob\left[ L_{\alpha,N} \in U,  L_{\alpha',N} \in U \right] \\
& \leq \sum_{ \alpha_j \neq \alpha'_j,\,\, \forall j\in I}p_{\alpha,\alpha'} 
+\,\sum_{ J\in\Pset{n}}\sum\limits_{\substack{ \alpha_j = \alpha'_j \,\, \forall \, j\in J \\ \alpha_j\neq \alpha_j' \,\, \forall j\in I\setminus J}}p_{\alpha,\alpha'}, 
\eea \eeq
where we have shortened, to lighten notations, 
\[
p_{\alpha,\alpha'} \ugdef \prob\left[ L_{\alpha,N} \in \overline{U},  L_{\alpha',N} \in \overline{U} \right].
\] 
Now, if $\alpha_j\neq\alpha'_j$ for all $j \in I$, then the associated empirical measures $L_{\alpha,N}$ and $L_{\alpha',N}$ are independent, and we may bound the corresponding contribution in \eqref{comincia} with the square of the first moment to obtain 
\begin{equation}\label{bound_second_mom}
\att M_N^2(U) \leq \left[ \att M_N(\overline{U}) \right]^2 + \,\sum\limits_{ J\in\Pset{n}}\sum\limits_{\substack{ \alpha_j = \alpha'_j \, \forall \, j\in J \\ \alpha_j\neq \alpha_j' \, \forall j\in I\setminus J}}p_{\alpha,\alpha'} .
\end{equation}

In order to control the probabilities in the sum on the r.h.s. above, we consider the generic situation where for some $J\in \Pset{n}$,  $\alpha_j=\alpha'_j \quad \forall j\in J$, $\alpha_j\neq \alpha'_j \quad \forall j\in I\setminus J$.  In this case, by the relative Sanov principle from Lemma \ref{relative_Sanov}, and for $N$ large enough, it holds:
\beq  \label{debut}
p_{\alpha, \alpha'} = \prob\left[ L_{\alpha,N} \in \overline{U},  L_{\alpha',N} \in \overline{U} \right] \leq \exp\left[ -N\inf\limits_{ \substack{\rho, \theta \in \overline{U} \\ \rho^{(\Pset{J})}=\theta^{(\Pset{J})}}}
\left\{ \mathfrak J_J(\rho, \theta) \right\}+\frac{N\eta}{4}\,\, \right]\,,
\eeq
where the quenched rate function with prescribed marginals as above satisfies  
\beq \bea
\mathfrak J_J(\rho, \theta) & = \relen{ \rho^{ \left( \Pset{J} \right) } }{ \mu^{ \left( \Pset{J} \right)} }+  H\left( \,  \rho   \,   |  \,  \rho^{\left( \Pset{J}\right)}\otimes \mu^{\left(\Pset{J}^c \right)}   \, \right)+ H\left( \,  \theta   \,   |  \,  \theta^{\left( \Pset{J}\right)}\otimes \mu^{\left(\Pset{J}^c \right)} \right) \\
& =  \relen{ \rho }{ \mu } + H\left( \,  \theta   \,   |  \,  \theta^{\left( \Pset{J}\right)}\otimes \mu^{\left(\Pset{J}^c \right)}   \, \right),
\eea \eeq
the first step by definition, and the second by the chain rule from Lemma \ref{chain_rule}. Plugging this into \eqref{debut} we thus get
\beq \bea \label{debu}
p_{\alpha, \alpha'} & \leq \exp\left[ -N\inf\limits_{ \substack{\rho, \theta \in \overline{U} \\ \rho^{(\Pset{J})}=\theta^{(\Pset{J})}}}
\left\{ \relen{ \rho }{ \mu } + H\left( \,  \theta   \,   |  \,  \theta^{\left( \Pset{J}\right)}\otimes \mu^{\left(\Pset{J}^c \right)}   \, \right) \right\}+\frac{N\eta}{4} \right] \\
& \leq 2^\frac{N|J|}{n} \exp\left[ -2N\relen{\nu}{\mu} - \frac{N\eta}{4}\right]\,,
\eea \eeq
the last inequality by the key bound \eqref{eta_due}, and therefore
\beq \bea
\sum\limits_{\substack{ \alpha_j = \alpha'_j \quad \forall j\in J \\ \alpha_j\neq \alpha_j' \quad \forall j\in I\setminus J}}p_{\alpha,\alpha'} & \leq 2^{(n-|J|)\frac{N}{n}}2^\frac{N|J|}{n}\left(2^\frac{N}{n}-1\right)^{(n-|J|)}2^\frac{N|J|}{n} \exp\left[ -2N\relen{\nu}{\mu} - \frac{N\eta}{4}\right]\\
& \leq 2^{2N} \exp\left[ -2N\relen{\nu}{\mu} - \frac{N\eta}{4}\right].
\eea \eeq
Plugging this in \eqref{bound_second_mom}, and recalling \eqref{Sanov_basic} with $\xi = \frac{\eta}{16} $, we thus get that for $N$ large enough
\beq 
\var M_N(U) \leq 2^{2N} (2^n-1)\exp\left[  -2N\relen{\nu}{\mu} -\frac{N\eta}{4} \right] \leq e^{-\frac{N\eta}{8}}\left[ \att M_N(U) \right]^2, 
\eeq
which settles the claim \eqref{second_moment} with $\delta = \frac{\eta}{16}$, say.

\end{proof}

\begin{proof}[Proof of Theorem \ref{teor_Gibbs}]
We begin with a couple of elementary observations. 
\begin{itemize}
\item First recall that for $J \in \Pset{n}$, 
\beq
C_J = \left\{ \nu \in \totprosp : \relen{ \nu^{(\Pset{J})} }{ \mu^{(\Pset{J})} } \leq \frac{|J|}{n} \log2 \right\}\,.
\eeq
These sets are compact thanks to the semicontinuity of $\nu\to\relen{\nu}{\mu}$ from Lemma \ref{semicont} . It thus follows that the intersection $\bigcap_{J\in\Pset{n}}C_J\subset \totprosp$ is itself compact, and in particular that there exists $\nu'\in\bigcap_{J\in\Pset{n}}C_J$ such that
$$\sup\limits_{\nu \in \bigcap\limits_{J\in\Pset{n}}C_J} \left\{ \int \phi d\nu-\relen{\nu}{\mu}\right\} = \int \phi d\nu'-\relen{\nu'}{\mu}.$$
\item It is well known that $\nu\to\relen{\nu}{\mu}$ is convex.
\item It clearly holds that $\mu\in\bigcap_{J\in\Pset{n}}C_J$.
\end{itemize}
Given a convex combination $\nu_\lambda\ugdef\lambda\mu+(1-\lambda)\nu'$, $0<\lambda<1$ we thus have 
$$\relen{ \nu_\lambda^{(\Pset{J})} }{ \mu^{(\Pset{J})} } < \frac{|J|}{n} \log2\qquad \forall J\in\Pset{n}.$$
We also know that as $\lambda \to 0$, $\nu_{\lambda}\to\nu'$ weakly and $\int \phi d\nu_\lambda\to\int \phi d\nu'$, $\relen{\nu_\lambda}{\mu}\to\relen{\nu'}{\mu}$. In particular, to $\epsilon>0$ we can find $\lambda>0$ with
\beq \label{conv_boh}
\int \phi d\nu_\lambda-\relen{\nu_\lambda}{\mu} \geq \int \phi d\nu'-\relen{\nu'}{\mu}-\epsilon.
\eeq
Moreover, since the map $\nu \to \int \phi(x) \nu(dx)$ is continuous, by Proposition \ref{relevant_prop} we can find a neighbourhood $U$ of $\nu_\lambda$, and $\delta>0$ such that
$$ \int \phi(x) \nu_\lambda(dx) -  \int \phi(x) \rho(dx) \leq \epsilon \quad \forall \rho\in U,$$
and with the neighbourhood $U$ such that
$$ \prob\left[ M_N(U) \leq 2^Ne^{-\relen{\nu_\lambda}{\mu}N-\epsilon N)}\right]\leq e^{-\delta N}.$$
With this choice, we thus have for the partition function of our spin glass  \eqref{partition_function} that
\beq \bea
Z_N(\phi) & = 2^{-N}\sum\limits_{\alpha=1}^{2^N} \exp\left[ N \int \phi(x) L_{N,\alpha}(dx)\right]\\
& \geq \exp \left[  N \int \phi(x) \nu_\lambda(dx) -\epsilon N \right] \exp \left[  -\relen{\nu_\lambda}{\mu}N-\epsilon N \right] \\
& \geq \exp\left[ N\sup\limits_{\nu\in\bigcap\limits_{J\in\Pset{n}}C_J} \left\{  \int \phi(x)d\nu-\relen{\nu}{\mu} \right\} -3\epsilon N    \right].
\eea \eeq
As $\epsilon$ is arbitrary, it follows from Borel-Cantelli that
\begin{equation} \label{lower_bound_final}
\liminf\limits_{N\to\infty} F_N(\phi) = \liminf\limits_{N\to\infty} \frac{1}{N}\log Z_N(\phi)  \geq \sup\limits_{\nu\in\bigcap\limits_{J\in\Pset{n}}C_J} \left\{  \int \phi(x)d\nu -\relen{\nu}{\mu} \right\}\,,
\end{equation}
almost surely. This settles the "first half of the theorem", i.e. the lower bound to the free energy.  \\

\noindent In order to establish the upper bound, we distinguish two cases:
\begin{itemize}
\item[i)]  For $\nu\in \bigcap\limits_{J\in\Pset{n}}C_J$, we choose a radius $r_\nu>0$ such that 
$$ \prob\left[ M_N(B_{r_\nu}(\nu)) \geq e^{N(\log2-\relen{\nu}{\mu}+\epsilon)}\right]\leq e^{-\delta_\nu N},$$
for large enough $N$ and some $\delta_\nu>0$; this is clearly possible in virtue of Proposition \ref{prop_1_G}.
The radius $r_\nu$ is chosen however small enough such that
$$ \left|  \int \phi(x) \rho(dx) - \int \phi(x) \nu(dx)  \right| \leq \epsilon,\qquad\forall \rho\in B_{r_\nu}(\nu),$$ 
(which is possible thanks to the aforementioned continuity of the map $\rho \mapsto \int \phi(x) \rho(dx)$).
\item[ii)] For $\nu\in C_I \setminus \left(  \bigcap\limits_{J\in\Pset{n}}C_J  \right)$ we still have $\relen{\nu}{\mu}\leq \log2$ and we can resort to Proposition \ref{prop_2_G} to find $r_\nu>0$ such that 
\begin{equation} \label{no_alphas_B}
\prob\left[ M_N(B_{r_\nu}(\nu)) \neq 0 \right] \leq e^{-\delta_\nu N},
\end{equation}
for large enough $N$, and some $\delta_\nu>0$. 
\end{itemize}

Using that $C_I \subset \bigcup_{\nu \in C_I} B_{r_\nu}(\nu)$, by compactness we may extract a finite sequence $\{\nu_j\}_{j=1}^m\subset C_I$ for which covering still holds, to wit
$$C_I \subset \bigcup_{j=1}^m B_{r_j}(\nu_j),$$
where we have also shortened $r_j\ugdef r_{\nu_j}$. Moreover, if we set
$$ U \ugdef  \bigcup_{j=1}^m B_{r_j}(\nu_j),\qquad \delta\ugdef \min\limits_{j\in\{1,\cdots,m\}} \delta_{\nu_j},$$
it follows from Sanov theorem that
\beq \label{sanov_good}
\limsup\limits_{N\to\infty} \frac{1}{N} \log \prob\left[  L_{N,\alpha}\notin U  \right]\leq -\inf\limits_{\nu\notin U} \relen{\nu}{\mu} < -\log2.
\eeq
Splitting the contributions to the partition function then steadily yields the upper bound
\beq \bea 
Z_N(\phi) & \leq 2^{-N} \sum_{l=1}^m \, \sum\limits_{\alpha:  L_{N,\alpha}\in B_{r_l}(\nu_l)} \, \exp{\left[ N \int \phi(x)L_{N,\alpha}(dx) \right]} + \\
& \hspace{5cm} + 2^{-N} \sum\limits_{\alpha:  L_{N,\alpha}\notin U} \exp{\left[\int \phi(x)L_{N,\alpha}(dx) \right]}\,.
\eea \eeq
By \eqref{sanov_good},  the second summand yields no contribution in the large $N$-limit. Analogously, \eqref{no_alphas_B} establishes that the same applies to those terms in the first summand for which $\nu_l\notin  \bigcap\limits_{J\in\Pset{n}}C_J $. All in all we have 
\beq \bea
Z_N & \leq 2^{-N} \sum\limits_{l:\,\, \nu_l\in\bigcap\limits_{J\in\Pset{n}}C_J}  \,\, \sum\limits_{\alpha:  L_{N,\alpha}\in B_{r_l}(\nu_l)} \, \exp{\left[ N \int \phi(x)L_{N,\alpha}(dx) \right]} \\
& \leq e^{\epsilon N}  \sum\limits_{l:\,\, \nu_l\in\bigcap\limits_{J\in\Pset{n}}C_J} \exp{\left[ N \int \phi d \nu_l \right]} M_N\left( B_{r_l}(\nu_l)  \right) \\
& \leq e^{2\epsilon N}   \sum\limits_{l:\,\, \nu_l\in\bigcap\limits_{J\in\Pset{n}}C_J} \exp{\left[ N \int \phi d \nu_l -N\relen{\nu_l}{\mu}\right]} \\
& \leq  e^{2\epsilon N} m \exp{\left[  N\sup\limits_{\nu\in  \bigcap\limits_{J\in\Pset{n}}C_J} \left\{ \int \phi d \nu -\relen{\nu}{\mu}  \right\}\right]}
\eea \eeq
almost surely, for large enough $N$.  Since $\epsilon$ is arbitrary we thus have
$$\limsup\limits_{N\to\infty} F_N(\phi) = \limsup\limits_{N\to\infty} \frac{1}{N} \log Z_N(\phi) \leq   \sup\limits_{\nu\in\bigcap\limits_{J\in\Pset{n}}C_J} \left\{  \int \phi d \nu -\relen{\nu}{\mu} \right\},$$
almost surely. This being the sought upperbound, Theorem \ref{teor_Gibbs} follows.
\end{proof}

\section{The Parisi variational principle}
In this section we prove Theorem \ref{solution_T_Par}.  The following upper bound 
\begin{equation}\label{upper_bound}
f(\phi) \leq \min\limits_{T\in\chset}f^T(\phi),
\end{equation} 
is immediate: since for any $T\in\chset$,
\[
\bigcap\limits_{J\in\Pset{n}}C_J\subset\bigcap\limits_{k=1}^n C_{A^T_k}\,,
\]
the Boltzmann-Gibbs variational principle \eqref{quenched_limit} is analogous to the ones in \eqref{free_en_T} with the only difference that the supremum in the former  is taken on a {\it smaller} set. This proves that $f(\phi)\leq f^T(\phi)$ for any $T\in\chset$, and thus settles \eqref{upper_bound}. 

On the other hand, the Parisi variational principle \eqref{Parisi_T} established in \cite{bk2}, states that for the  Perceptron GREM  
\beq \label{P_T}
f^T(\phi) = \min_{\boldsymbol m \in \Delta} P^T(\boldsymbol m).
\eeq
Combining \eqref{upper_bound} and \eqref{P_T} we thus have
\beq
f(\phi) \leq \min_{T \in \mathcal C_n} \left\{ \min_{m \in \Delta} P^T(\boldsymbol m) \right\}\,,
\eeq
which is "half of the theorem". 

In order to prove the "second half", namely the lower bound, some infrastructure is needed. Following \cite{bk2}, for each $T\in\chset$ we define a probability distribution $G^T = G^T(\boldsymbol m)$ on $S^{2^n-1}$ which depends on the parameter $\boldsymbol{m}\in\Delta$. This {\it generalized Gibbs measure} is associated to the Parisi functional $P^T$, recall \eqref{func_Par} above, and is described through
\begin{itemize}
\item a measure $\gamma^T$ on $S$,  
\item and, for $2\leq j \leq n$, Markov kernels $K^T_j$ with source $S^{|\Pset{j-1}^T|}=S^{2^{j-1}-1}$ and target $S^{|T_{j}|}=S^{2^{j-1}}$.
\end{itemize}
Precisely, and recalling the functions $\left\{ \phi^T_k \right\}_{k=1}^n$ from \eqref{def_phik}, the generalized Gibbs measure $\gt$ is given by the semi-direct product
\begin{equation}\label{G_meas}
\gt = \gamma^T\otimes K^T_2\otimes \cdots \otimes K^T_n.
\end{equation}
$$ \gamma^T(dx) \ugdef \frac{\exp\left[m_1 \phi_1^T(x)\right]\mu_{A^T_1}(dx)}{\exp\left[m_1 \phi_0^T(x)\right]},$$
$$ K^T_j( \vx{1},\cdots,\vx{j-1},d\vx{j}) \ugdef  \frac{\exp\left[m_j \phi_j^T(\vx{1},\cdots,\vx{j})\right]\mu^{(T_{j})}(d\vx{j})}{\exp\left[m_j \phi_{j-1}^T(\vx{1},\cdots,\vx{j-1})\right]}$$
where\footnote{Notice that with this notation it holds in particular that $(\vx{1},\cdots,\vx{j-1})=(x_J)_{J\in\Pset{j-1}^T}\in S^{2^{j-1}-1}.$} $\vx{j}=\left(x_J\right)_{J\in T_{j}}\in S^{2^{j-1}}$ We also write 
\begin{equation}\label{G_marginal_P}
 \gt_{j-1} \ugdef \pgt^{ ( \Pset{ j-1 }^T ) } \equiv \gamma^T\otimes K^T_2\otimes \cdots \otimes K^T_{j-1}
\end{equation}
for the marginal of $\gt$ on coordinates corresponding to elements of $\Pset{j-1}^T$.

In virtue of \cite{bk2}, the  generalized Gibbs measure $G^T$ is the unique solution to the Boltzmann-Gibbs variational principle to given chain: precisely, with 
\beq
\boldsymbol m \ugdef \arg\min_{\boldsymbol n \in \Delta} P^T( \boldsymbol n)
\eeq
the optimal parameter in Parisi variational principle associated to a $T$-chain, and recalling the Boltzmann-Gibbs variational principle \eqref{free_en_T}, it follows  from \cite[Theorem 2.3-2.5]{bk2} that
\beq
G^T(\boldsymbol m) = \arg\sup_{\nu\in\totprosp} \left\{ \Phi(\nu) - \mathfrak J^T(\nu) \right\}\,.
\eeq
In particular, the measure $G^{T}(\boldsymbol m)$ satisfies all constraints for the empirical measures in the associated Boltzmann-Gibbs principle for the chain $T$, i.e.
\beq \label{sidecon}
G^{T}(\boldsymbol m) \in \bigcap\limits_{k=1}^n C_{A_k^T}.
\eeq
Without loss of generality\footnote{Such form of the minimal chain can always be achieved by re-labelling.}, we shall assume here that the {\it minimal chain} 
\beq 
\mathcal T \ugdef \arg\min\limits_{T \in\chset} f^{T}(\phi)\,,
\eeq
i.e. the one which minimizes {\it all} Parisi variational principles, is given by  
\beq  \label{T_min}
\mathcal T =\left\{A^{\mathcal T}_k\right\}_{k=0}^n\quad\text{with}\quad A^{\mathcal T}_k\ugdef\{1,2,\cdots ,k-1,k\}.
\eeq
We now claim that the generalized Gibbs measure $G^{\mathcal T}_{\boldsymbol m'}$ associated to the minimal chain satisfies {\textit{all}} constraints \eqref{def_cond} or, which is the same,  that
\beq \label{Gt_sat}
\boldsymbol m' \ugdef \arg \min_{\boldsymbol m \in \Delta} P^{\mathcal T}(\boldsymbol{m}) 
\Longrightarrow  G^{\mathcal T}_{\boldsymbol{m}'} \in \bigcap_{J\in\Pset{n}} C_J.
\eeq
This amounts to saying that the measure $G^{\mathcal T}_{\boldsymbol m'}$ is a viable candidate for the Boltzmann-Gibbs variational principle for the nonhierarchical Perceptron GREM: Theorem \ref{teor_Gibbs} would then imply that 
\beq \bea 
f(\phi) & \geq \int \phi d G^{\mathcal T}(\boldsymbol m') - H\left( G^{\mathcal T}(\boldsymbol m') \mid \mu \right) \\
& = \min_{T \in \mathcal C_n} \left\{ \min_{m \in \Delta} P_T(\boldsymbol m) \right\},
\eea \eeq
with the second line by definition/construction of $\mathcal T, \boldsymbol m'$, and Theorem \ref{solution_T_Par} would follow.\\

\noindent It therefore remains to prove the key claim \eqref{Gt_sat}. This requires some preliminary observations. 
Recalling that $\phi^T_{l}$ depends on $m_j$ only for $l\in\{1,\cdots,j-1\}$ a simple computation yields 
\beq \bea 
\partial_j P^T(\boldsymbol{m}) = \frac{1}{m_j^2} \left[
\int H\left( \, K_j^T(\vx{1},\cdots,\vx{j-1},\cdot)\, | \, \mu^{\left( \Bj \right)} \, \right) \, G^T_{j-1}(d\vx{1},\cdots,d\vx{j-1})
-\frac{\log2}{n}\right], 
\eea \eeq
cfr. also \cite[eq. 3.13]{bk2}.  Using that $G^T_{j-1}\otimes K^T_j \equiv G^T_{j}$, the integral in the above can be written as
\beq \bea
& \int H\left( \, K_j^T(\vx{1},\cdots,\vx{j-1},\cdot)\, | \, \mu^{\left( \Bj \right)} \, \right) \, G^T_{j-1}(d\vx{1},\cdots,d\vx{j-1})  \\
& \hspace{3cm} =  \int \log\left(f(\vx{j}|\vx{1},\cdots,\vx{j-1})\right) G^T_j(d\vx{1},\cdots,d\vx{j})
\eea \eeq
where we shortened $f(\cdot|\vx{1},\cdots,\vx{j-1})$ for the Radon-Nikodym derivative of the measure $K_j^T(\vx{1},\cdots,\vx{j-1},\cdot)$ with respect to $\mu^{\left( T_j \right)}$. Following the steps \eqref{comi_uno}-\eqref{comi_due}
with $\mu^{(B^c)}:= \mu^{\left( \Bj \right)}$, $\nu^{(B^c|B)} := K^T_j$ (in which case  $\nu^{(B)}:= G^T_{j-1}$, $\nu :=  G^T_j$) we get 
$$ \int \log\left(f(\vx{j}|\vx{1},\cdots,\vx{j-1})\right) G^T_j(d\vx{1},\cdots,d\vx{j}) = H\left( \, G^T_j \, | \,  G^T_{j-1}\otimes\mu^{(\Bj)}\right),$$
which implies
\begin{equation}\label{derivative}
\partial_j P^T(\boldsymbol{m}) =\frac{1}{m_j^2}\left[ H\left( \, G^T_j \, | \,  G^T_{j-1}\otimes\mu^{(\Bj)}\right) - \frac{\log2}{n} \right].
\end{equation}
Coming back to our main task, a proof that the minimal chain satisfies all side constraints in the Boltzmann-Gibbs principle, let 
\beq \label{choice}
J=\left\{j_1<j_2<\cdots<j_{M-1}<j_M\right\}\in\Pset{n}
\eeq
be a generic non-empty subset of $I$. The goal is thus to show  that
\begin{equation}\label{cond_Gt_J}
H\left(\, \left( G^{\mathcal T}\right)^{(\Pset{J})} \, | \, \mu^{(\Pset{J})} \, \right) \leq \frac{M}{n}\log2.
\end{equation}
Recall the definition \eqref{index_k} of the index $k_J$ with the property that $J \in \mathcal{T}_{k_J}$:
under \eqref{T_min} it holds that ${\mathcal T}_k = \Pset{k}^{\mathcal T} \setminus \Pset{k-1}^{\mathcal T} \equiv \Pset{(1,\cdots,k)}\setminus \Pset{(1,\cdots, k-1)}, k\in I$, hence
$$k_J=\max\{j:\, j\in J\}=j_M\,\geq\, M.$$
In light of \eqref{derivative}, which shows how the conditions on the relative entropies relate to the partial derivatives of the Parisi functions, it is natural to distinguish two cases, depending on the latter being  negative or positive. \\

\noindent {\bf First case: $\partial_{k_J} P^{\mathcal T}(\boldsymbol{m}')\leq 0$.}  It then follows from \eqref{derivative} 
\begin{equation} \label{deriv_non_pos}
H \left( G^{\mathcal T}_{k_J} \,  | \, G^{\mathcal T}_{k_J-1}\otimes\mu^{( {\mathcal T}_{k_J}   )} \right) 
\leq \frac{\log2}{n}.
\end{equation}
We distinguish two sub-cases:
\begin{itemize}
\item[i)] When $M=1$, namely when $J=\{ j_1\}$ and $k_J = j_1$, we have $\Pset{J} = \{J\} \subseteq {\mathcal T}_{k_J}$. In particular, the marginal of $\,\,G^{\mathcal T}_{k_J-1}\otimes \mu^{({\mathcal T}_{k_J})}\,\,$ on $\mathcal P_J$ is $\mu^{(\mathcal P_J)}=\mu_{J}$ hence 
\beq \bea \label{uno}
H\left( (G^{\mathcal T})^{(\Pset{J})} \, | \, \mu^{(\Pset{J})} \right) & = H\left( (G^{\mathcal T})^{(\{J\})} \, | \, \mu_{J} \right) \\
& \stackrel{\eqref{AM}}{\leq} H \left( G_{k_J}^{\mathcal T} \,  | \, G^{\mathcal T}_{k_J-1}\otimes\mu^{({\mathcal T}_{k_J})} \right) \stackrel{\eqref{deriv_non_pos}}{\leq} \frac{\log2}{n},
\eea \eeq
which establishes \eqref{cond_Gt_J} for $J$-s of cardinality one.\\

\item[ii)] For sets of cardinality $M=2,\dots,n$ we proceed inductively: with 
$$
{\mathfrak O} \ugdef J\setminus{\{k_J\}}=\{j_1,j_2,\dots,j_{M-1}\}\,,
$$
we assume  that
\beq \label{indu}
H\left(   (G^{\mathcal T})^{(\Pset{{\mathfrak O}})} \,|\,  \mu^{(\Pset{{\mathfrak O}})}   \right) \leq \frac{M-1}{n}\log2\,.
\eeq
and claim that
\beq \label{aswell}
H\left( (G^{\mathcal T})^{(\Pset{J})} \, | \, \mu^{(\Pset{J})} \right) \leq \frac{M}{n} \log 2. 
\eeq
To see this we first observe that, evidently, $ |\mathfrak O| = M-1$, and 
$$\Pset{\mathfrak O}\subseteq \Pset{k_J-1}^{\mathcal T},\quad\Pset{J}\subseteq \Pset{{\mathfrak O}}\cup \mathcal{T}_{k_J}, \qquad \Pset{{\mathfrak O}}\cap \mathcal{T}_{k_J} = \emptyset,$$
hence by the monotonicity property \eqref{AM} it holds 
\beq \bea \label{baa}
& H\left( (G^{\mathcal T})^{(\Pset{J})} \, | \, \mu^{(\Pset{J})} \right) \leq  H\left( (G^{\mathcal T})^{(\Pset{{\mathfrak O}}\,\cup\, \mathcal{T}_{k_J})} \, | \, \mu^{(\Pset{{\mathfrak O}}\,\cup \,\mathcal{T}_{k_J})} \right) \\
& \stackrel{\eqref{chain_rule}}{=} H\left(   (G^{\mathcal T})^{(\Pset{{\mathfrak O}})} \,|\,  \mu^{(\Pset{{\mathfrak O}})}   \right) +  H\left(  (G^{\mathcal T})^{(\Pset{\mathfrak O}\,\cup\, \mathcal{T}_{k_J})} \, | \,  (G^{\mathcal T})^{(\Pset{{\mathfrak O}})}\otimes\mu^{(\mathcal{T}_{k_J})}  \right) \\
& \stackrel{\eqref{indu}}{\leq} \frac{M-1}{n} \log 2 + H\left(  (G^{\mathcal T})^{(\Pset{{\mathfrak O}}\,\cup\, \mathcal{T}_{k_J})} \, | \,  (G^{\mathcal T})^{(\Pset{{\mathfrak O}})}\otimes\mu^{(\mathcal{T}_{k_J})}  \right)\,.
\eea \eeq
For the second summand above we write
\beq \bea \label{baaa}
H\left(   (G^{\mathcal T})^{(\Pset{{\mathfrak O}}\,\cup\,\mathcal T_{k_J})} \, | \,  (G^{\mathcal T})^{(\Pset{\mathfrak O})}\otimes\mu^{(\mathcal T_{k_J})}  \right) & \stackrel{\eqref{AM}}{\leq} H \left( G^{\mathcal T}_{k_J} \,  | \,  G^{\mathcal T}_{k_J-1} \otimes \mu^{(\mathcal{T}_{k_J})} \right) \\
& \stackrel{\eqref{deriv_non_pos}}{\leq} \frac{\log2}{n},
\eea \eeq
Plugging \eqref{baaa} into \eqref{baa} therefore yields
\beq
H\left( (G^{\mathcal T})^{(\Pset{J})} \, | \, \mu^{(\Pset{J})} \right) \leq \frac{M-1}{n}\log2 + \frac{\log2}{n} = \frac{M}{n}\log2, 
\eeq
settling the induction step \eqref{aswell}. \\
\end{itemize}

\noindent{\bf Second case: $\partial_{k_J} P^{\mathcal T}(\boldsymbol{m}')>0$.} It will become clear in the treatment that this is a somewhat degenerate situation where the minimal chain is not unique. As a matter of fact, we will use such lack of uniqueness to our advantage, in sofar we will deduce the validity of the side constraints from the properties of the Parisi functional evaluated in yet a second, well chosen (and also: minimal) chain. To see how this goes, we first claim that for $k\in\{2,\cdots,n\}$ the following implication holds true:
\beq \label{second_case}
\partial_{k} P^{\mathcal T}(\boldsymbol{m}')>0  \Longrightarrow \; \forall\,j=1,\cdots,k-1: \, m'_j = m'_{j+1}\,.
\eeq
We will prove the following, fully equivalent implication: with
$$ l = l(\mathcal T,k) \ugdef \min\left\{\,\, j : j\in\{1,2,\cdots,k-2,k-1\};\,\,\, m'_j = m'_{j+1} \,\, \right\},$$
we claim that 
\beq  \label{impli_basta} 
\partial_{k} P^{\mathcal T}(\boldsymbol{m}')>0  \Longrightarrow l = 1. 
\eeq
Proceeding by contradiction, let us assume that $l > 1$, in which case $l-1\geq 1$ and 
$$
m'_{l-1} < m'_l = m'_{l+1} = \cdots = m'_{k-1} =  m'_{k}  \leq m'_{k+1} 
$$
(with $m'_{n+1}=1$). Since $\boldsymbol{m}'= \arg \min_{\boldsymbol{m}\in\Delta} P^{\mathcal T}(\boldsymbol{m})$, the function 
$$m_k\in[ m'_{l-1} , m'_{k+1}] \, \rightarrow P^{\mathcal T}(m'_1, \cdots, m'_{k-1},m_k,m'_{k+1},\cdots,m'_n)$$
assumes its minimum in $m_k=m'_k\in( m'_{l-1} , m'_{k+1}]$ and this implies $\partial_{k} P^{\mathcal T}(\boldsymbol{m}')\leq 0$,  which contradicts our working assumption {\it unless} $l=1$, hence
\beq \label{tutti_uguali}
m_1' = m_2' = \dots = m_{k}',
\eeq
and \eqref{impli_basta} is settled.  

The next, and final, step is to show - as anticipated - that if $l(\mathcal T, k_J)=1$ we may find yet another chain $\mathcal R$ with the same "Parisi free energy" as the chain $\mathcal T$, but from which we may also deduce the validity of the side constraints \eqref{cond_Gt_J}, namely that for $J = \{ j_1 < j_2 < \cdots < j_M\}$, 
\beq
H\left(\, \left( G^{\mathcal T}\right)^{(\Pset{J})} \, | \, \mu^{(\Pset{J})} \, \right) \leq \frac{M}{n}\log2.
\eeq
Towards this goal, define $$\{b_1<\cdots<b_{j_M-M}\}\ugdef A^{\mathcal T}_{j_M}\setminus J\equiv \{1,2,\cdots,j_M-1,j_M\}\setminus \{j_1,\cdots, j_M\},$$ denote by $\mathcal R=\left\{A^{\mathcal R}_k\right\}_{k=0}^n\in\chset$ the chain with 
\begin{equation}\label{T_tilde}
A^{\mathcal R}_k \ugdef \begin{cases}
(j_1,j_2,\cdots,j_{k-1},j_k) & \text{if}\quad k\in\{1,\cdots,M\}\\
J\cup\left\{b_1,\cdots,b_{k-M}\right\} & \text{if}\quad k\in\{M+1,\cdots,j_M-1\}\\
A^{\mathcal T}_k & \text{if}\quad k\in\{j_M,\cdots,n\} \end{cases},
\end{equation}
and consider the corresponding extremal parameter $$\boldsymbol{m}''=\text{argmin}_{\boldsymbol{m}\in\Delta}P^{\mathcal R}(\boldsymbol{m}).$$ 
In light of \eqref{tutti_uguali}, with $k= k_J$ and since $k_J = j_M$ we have for the coordinates of the extremal parameter $\boldsymbol{m}'$ associated to $\mathcal T$ that $m_1' = m_2' = \dots = m_{j_M}'$ and therefore 
\[
\frac{m'_l}{m'_{l+1}} = 1, \qquad l=1,\cdots,j_M-1.
\] 
Due to the recursive construction \eqref{def_phik}, and shortening $\vx{j}=(x_J)_{J\in\mathcal T_j}$,  it follows that
\beq \bea  \label{phizero_T}
\phi^{\mathcal T}_0(\boldsymbol m') &=\frac{1}{m_1'}\log\int e^{{m_1'}\phi^{\mathcal T}_1(\vx{1})}\mu^{(\mathcal{T}_1)}(d\vx{1}) \\
& =\frac{1}{m_1'}\log \int \exp \left[\frac{m'_1}{m'_2}\log \int e^{m_2'\phi^{\mathcal T}_2(\vx{1},\vx{2})}\mu^{(\mathcal{T}_2)}(d\vx{2})\right]\mu^{({\mathcal T}_1)}(d\vx{1}) \\
& = \frac{1}{m_1'}\log\int \exp\left[{m_2' \phi^{\mathcal T}_2(\vx{1},\vx{2})}\right] \mu^{({\mathcal{T}}_1)} \otimes\mu^{({\mathcal T}_2)}(d\vx{1},d\vx{2})\,.
\eea \eeq
Again since $m_1' = m_2'$, the r.h.s. of \eqref{phizero_T} equals
\beq \bea
& \frac{1}{m_1'}\log\int \exp\left[{m_1' \phi^{\mathcal T}_2(\vx{1},\vx{2})}\right] \mu^{({\mathcal{T}}_1)} \otimes\mu^{({\mathcal T}_2)}(d\vx{1},d\vx{2}) \\
& \qquad =\frac{1}{m_1'}\log\int \exp\left[{m_1' \phi^{\mathcal T}_{j_M}(\vx{1},\vx{2},\cdots,\vx{j_M})}\right] \bigotimes_{l=1}^{j_M}\mu^{({\mathcal{T}}_l)} (d\vx{l}),
\eea \eeq
the last equality by iteration. All in all,  
\beq \label{heila}
\phi^{\mathcal T}_0(\boldsymbol m') = \frac{1}{m_1'}\log\int \exp\left[{m_1' \phi^{\mathcal T}_{j_M}(\vx{1},\vx{2},\cdots,\vx{j_M})}\right] \bigotimes_{l=1}^{j_M}\mu^{({\mathcal{T}}_l)} (d\vx{l}), 
\eeq
and in complete analogy for the $\mathcal R$-chain but this time with $\vx{j} \ugdef (x_J)_{J\in{\mathcal R}_j}$
\beq \label{heila_two}
\phi^{\mathcal R}_0(\boldsymbol m')= \frac{1}{m_1'}\log\int \exp\left[{m_1' \phi^{\mathcal R}_{j_M}(\vx{1},\vx{2},\cdots,\vx{j_M})}\right] \bigotimes_{l=1}^{j_M}\mu^{({\mathcal{R}}_l)} (d\vx{l}).
\eeq
Since $\bigcup_{l=1}^{j_M} \mathcal T_l = \mathcal P^{\mathcal T }_{j_M}\equiv \mathcal P^{\mathcal R }_{j_M}=\bigcup_{l=1}^{j_M}  \mathcal R_l$, the product measures in \eqref{heila} and \eqref{heila_two} are the same, to wit 
\beq \label{int_same}
\bigotimes_{l=1}^{j_M}\mu^{({\mathcal{R}}_l)}=\bigotimes_{l=1}^{j_M}\mu^{({\mathcal{T}}_l)}=\mu^{(\mathcal{P}^{\mathcal T}_{j_M})}=\bigotimes_{J\in \mathcal{P}_{\{1,\cdots, j_M\}}} \mu_J(dx_J)\,.
\eeq
Furthermore, the form taken by the functions $\phi^{\mathcal R}_{j_M}(\boldsymbol m'),\cdots,\phi^{\mathcal R}_{n}(\boldsymbol m')$ and respectively $\phi^{\mathcal T}_{j_M}(\boldsymbol m'),\cdots,\phi^{\mathcal T}_{n}(\boldsymbol m')$ depends on the sets $\{A_k^{\mathcal T}\}_{k=J_M+1}^n$ only, respectively $\{A_k^{\mathcal R}\}_{k= J_M+1}^n$: as these sets coincide, it follows, in particular,  that 
\beq \label{quasi_fine}
\phi^{\mathcal R}_{j_M}(\boldsymbol m')\equiv\phi^{\mathcal T}_{j_M}(\boldsymbol m').
\eeq 
Combining \eqref{heila}-\eqref{quasi_fine}, we therefore see that 
\begin{equation} \bea  \label{bo}
P^{\mathcal T}(\boldsymbol{m}') & = \frac{\log 2}{n} \sum_{k=1}^n \frac{1}{m_k'} + \phi_0^{\mathcal T}(\boldsymbol m')  = \frac{\log 2}{n} \sum_{k=1}^n \frac{1}{m_k'} + \phi_0^{\mathcal R}(\boldsymbol m')  = P^{\mathcal R}(\boldsymbol{m}')\,,
\eea 
\end{equation}
and by similar reasoning, 
\beq \label{bo1}
G^{{\mathcal T}}(\boldsymbol{m}') =G^{\mathcal R}(\boldsymbol{m}').
\end{equation}
From \eqref{bo} and minimality of the chain $\mathcal T$, i.e.
$$
P^{\mathcal T}(\boldsymbol{m}')=\min\limits_{\boldsymbol{m}\in\Delta}P^{\mathcal T}(\boldsymbol{m}) \leq \min\limits_{\boldsymbol{m}\in\Delta}P^{\mathcal R}(\boldsymbol{m}) = P^{\mathcal R}(\boldsymbol{m}''),
$$
if follows in particular that 
\beq
P^{\mathcal R}(\boldsymbol m') = P^{\mathcal R}(\boldsymbol m''). 
\eeq
By uniqueness of the minimum of the Parisi principle, see \cite[Prop. 3.6]{bk2}, it thus follows that $\boldsymbol m' = \boldsymbol m''$, which together with \eqref{bo1} implies
\beq
G^{{\mathcal T}}(\boldsymbol{m}') =G^{\mathcal R}(\boldsymbol{m}') = G^{\mathcal R}(\boldsymbol{m}'')\,.
\eeq
Since $A^{\mathcal R}_{M}=J$, we  have
\beq \bea
H\left( \, (G^{\mathcal T})^{(\Pset{J})} \, | \, \mu^{(\Pset{J})} \, \right) &= H\left( \, [G^{\mathcal T}(\boldsymbol{m}')]^{(\Pset{J})} \, | \, \mu^{(\Pset{J})} \, \right) \\
& =  H\left( \, [G^{\mathcal R}(\boldsymbol{m}'')]^{(\Pset{M}^{\mathcal R})} \, | \, \mu^{(\Pset{M}^{\mathcal R})} \, \right) 
\leq \frac{M}{n}\log2,
\eea \eeq
the last inequality since $G^{\mathcal R}(\boldsymbol{m}'')\in C_{A^{\mathcal R}_{M}}$, i.e. the measure $G^{\mathcal R}(\boldsymbol{m}'')$ satisfies the side constraints on the empirical measures. This concludes the proof of Theorem \ref{solution_T_Par}. \\

\hfill $\square$

\end{document}